%% This is file `elsarticle-template-1a-num.tex',
%%
%%
%% This file is part of the 'Elsarticle Bundle'.
%% ---------------------------------------------
%%
%% It may be distributed under the conditions of the LaTeX Project Public
%% License, either version 1.2 of this license or (at your option) any
%% later version.  The latest version of this license is in
%%    http://www.latex-project.org/lppl.txt
%% and version 1.2 or later is part of all distributions of LaTeX
%% version 1999/12/01 or later.
%%
%% The list of all files belonging to the 'Elsarticle Bundle' is
%% given in the file `manifest.txt'.
%%
%% Template article for Elsevier's document class `elsarticle'
%% with numbered style bibliographic references
%%
%% $Id: elsarticle-template-1a-num.tex 151 2009-10-08 05:18:25Z rishi $
%% $URL: http://lenova.river-valley.com/svn/elsbst/trunk/elsarticle-template-1a-num.tex $
%%
\documentclass[preprint,3pt]{elsarticle}

%% Use the option review to obtain double line spacing
%% \documentclass[preprint,review,12pt]{elsarticle}

%% Use the options 1p,twocolumn; 3p; 3p,twocolumn; 5p; or 5p,twocolumn
%% for a journal layout:
%% \documentclass[final,1p,times]{elsarticle}
%% \documentclass[final,1p,times,twocolumn]{elsarticle}
%% \documentclass[final,3p,times]{elsarticle}
%% \documentclass[final,3p,times,twocolumn]{elsarticle}
%% \documentclass[final,5p,times]{elsarticle}
%% \documentclass[final,5p,times,twocolumn]{elsarticle}

%% if you use PostScript figures in your article
%% use the graphics package for simple commands
%% \usepackage{graphics}
%% or use the graphicx package for more complicated commands
%% \usepackage{graphicx}
%% or use the epsfig package if you prefer to use the old commands
%% \usepackage{epsfig}

% \usepackage{showkeys}

%% The amssymb package provides various useful mathematical symbols
\usepackage{amssymb}
\usepackage{amsthm,amssymb,amsmath,extarrows}
\usepackage{pdfpages}
\usepackage{graphicx}
\usepackage{geometry}
\usepackage{url}
\usepackage{enumerate}
\usepackage{extarrows}
\theoremstyle{plain} \newtheorem{thm}{\bf Theorem}[section]
 \newtheorem{lem}[thm]{\bf Lemma}
  \theoremstyle{remark} \newtheorem{remark}[thm]{\bf Remark}

\newcommand{\Pro}{\mathbf{P}}
\newcommand{\Es}{{\mbox{\bf E}}}
%Glossaire

    													%Hurst exponent
             				%2x2 matrices

											%Information de Fischer

\newcommand{\MLE}{\hat{\vartheta}}

%\usepackage{setspace}
%\doublespacing

\newcommand{\Rg}       {{\hbox{I\kern-.22em\hbox{R}}}}
\newcommand{\Pg}       {{\hbox{I\kern-.22em\hbox{P}}}}
\newcommand{\Eg}       {{\hbox{I\kern-.22em\hbox{E}}}}

\newcommand{\tr}   {\mbox{ \rm{tr} }}

%%%% color
\definecolor{lw}{RGB}{0,0,255}

%% The amsthm package provides extended theorem environments
%% \usepackage{amsthm}

%% The lineno packages adds line numbers. Start line numbering with
%% \begin{linenumbers}, end it with \end{linenumbers}. Or switch it on
%% for the whole article with \linenumbers after \end{frontmatter}.
%% \usepackage{lineno}

%% natbib.sty is loaded by default. However, natbib options can be
%% provided with \biboptions{...} command. Following options are
%% valid:

%%   round  -  round parentheses are used (default)
%%   square -  square brackets are used   [option]
%%   curly  -  curly braces are used      {option}
%%   angle  -  angle brackets are used    <option>
%%   semicolon  -  multiple citations separated by semi-colon
%%   colon  - same as semicolon, an earlier confusion
%%   comma  -  separated by comma
%%   numbers-  selects numerical citations
%%   super  -  numerical citations as superscripts
%%   sort   -  sorts multiple citations according to order in ref. list
%%   sort&compress   -  like sort, but also compresses numerical citations
%%   compress - compresses without sorting
%%
%% \biboptions{comma,round}

% \biboptions{}

\journal{journal}

\begin{document}

\begin{frontmatter}

%% Title, authors and addresses

%% use the tnoteref command within \title for footnotes;
%% use the tnotetext command for the associated footnote;
%% use the fnref command within \author or \address for footnotes;
%% use the fntext command for the associated footnote;
%% use the corref command within \author for corresponding author footnotes;
%% use the cortext command for the associated footnote;
%% use the ead command for the email address,
%% and the form \ead[url] for the home page:
%%
%% \title{Title\tnoteref{label1}}
%% \tnotetext[label1]{}
%% \author{Name\corref{cor1}\fnref{label2}}
%% \ead{email address}
%% \ead[url]{home page}
%% \fntext[label2]{}
%% \cortext[cor1]{}
%% \address{Address\fnref{label3}}
%% \fntext[label3]{}

\title{A Note On Inference for the Mixed Fractional Ornstein-Uhlenbeck  Process with Drift}

%% use optional labels to link authors explicitly to addresses:
%% \author[label1,label2]{<author name>}
%% \address[label1]{<address>}
%% \address[label2]{<address>}

\author{Cai Chunhao \\Shanghai University of Finance and Economics   
\\caichunhao@mail.shufe.edu.cn\\
Zhang Min\\Shanghai University of Finance and Economics\\zm\_hardworker@163.com}

\begin{abstract}
This paper is devoted to parameter estimation of the mixed fractional Ornstein-Uhlenbeck process with a drift. Large sample asymptotical properties of the Maximum Likelihood Estimator is deduced using the Laplace transform computations or the Cameron-Martin formula with extra part from \cite{CK19}.\end{abstract}

\begin{keyword}

mixed fractional Brownian motion,\, fundamental martingale,\, Laplace Transform,\, optimal input

\vspace{1mm}
\textit{2010 AMS Mathematics subject classification}: Primary 60G22, Secondary 62F10

\end{keyword}

\end{frontmatter}

\section{Introduction}
The drift parameter estimation of the Ornstein-Uhlenbeck process has been paid more and more attention in the past decades. These years, researchers not only considered the process with standard Brownian motion or Levy process but also the fractional case (see \textit{e.g.} \cite{Yuri} \cite{Klep02a}). The MLE and its large deviation of the mixed fractional case has been studied by Chigansky et \textit{a.l.} \cite{CK19}, \cite{Maru}. In this paper we will consider still the MLE of the drift parameter but with an extra part. 

Let us define $X=(X_t, 0\leq t\leq T)$ a real-valued process, representing the observation,  which is governed by:
\begin{equation}\label{model}
dX_t=-\vartheta X_tdt+u(t)dt+d\xi_t,\, t\in [0,T],\, X_0=0
\end{equation}
where $\xi=(\xi_t,\, 0\leq t\leq T)$ is a mixed fractional fractional  Brownian motion (mfBm for short) which is defined by $\xi_t=W_t+B_t^H$ , here $W=(W_t,\, 0\leq t\leq T)$ and  $B^H=(B_t^H,\, 0\leq t \leq T$ are independent standard Brownian motion and fractional Brownian motion with $H\in (0,1),\, H\neq 1/2$.   First of all, we will consider $u(t)$ is known constant. In fact, this problem has been considered in \cite{CHX20} but here we will use the Cameron-Martin formula which will be the key method for this type problem to reprove the result. After that we will consider the problem of experiment design. 

In the statistical aspect, the classical approach for experiment design consists on a two-step procedure: maximize the Fisher information under energy constraint of the input and find an adaptive estimation procedure. Ovseevich et \textit{a.l.} \cite{Oss} has first consider this type problem for the diffusion equation with continuous observation.  When the kernel in \cite{Oss} is not with explicit formula in the fractional diffusion case, Brouste et \textit{a.l.} \cite{BKP10, BKP12} deduce the lower bound and upper bound with the method of spectral gap and solve the same problem. Base on this method, Brouste and Cai  \cite{BC13} have extended the result to the partially observed fractional Ornestein-Uhlenbeck process, in this work the asymptotical normality has been demonstrated with  linear filtering of Gaussian processes and Laplace Transform presented in \cite{Liptser, BK10, Klep01, Klep02a, Klep02b}. These previous work, the common point is that: the optimal input does not depend on the unknown parameter and maximum likelihood estimator can be found directly from the likelihood equation.  The one-step estimator will be used following the Newton-Raphson method and this work was introduced by Cai and LV \cite{CL20}.

For a fixed value of parameter $\vartheta$, let $\mathbf{P}_{\vartheta}^T$ denote the probability measure, induced by $X^T$ on the function space $\mathcal{C}_{[0,\,T]}$ and let $\mathcal{F}_t^X$ be the nature filtration of $X$, $\mathcal{F}_t^X=\sigma(X_s,\, 0\leq s\leq t)$. Let $\mathcal{L}(\vartheta,\, X^T)$ be the likelihood, {\it i.e.}  the Radon-Nikodym derivative of $\mathbf{P}_{\vartheta}^T$,  restricted to $\mathcal{F}_T^Y$ with respect to some reference measure on $\mathcal{C}_{[0,\,T]}$. In this setting, Fisher information stands for 
$$
\mathcal{I}_T(\vartheta,\, u)=-\mathbf{E}_{\vartheta}\frac{\partial^2}{\partial \vartheta^2} \ln \mathcal{L}(\vartheta,\,X^T).
$$
Let us denote $\mathcal{U}_T$ some functional space of controls, that is defined by Eqs. \eqref{eq:u}
and \eqref{space v}. Let us therefore note
\begin{equation}\label{eq: J max}
\mathcal{J}_T(\vartheta)=\sup_{u\in \mathcal{U}_T}\mathcal{I}_T(\vartheta,\,u).
\end{equation}
our main goal is to find estimator $\overline{\vartheta}_T$ of the parameter $\vartheta$ which is asymptotically efficient in the sense that, for any compact $\mathbb{K}\in \mathbb{R}^{+}_{*}=\{\vartheta \in \mathbb{R}, \vartheta>0\}$ ,
\begin{equation} \label{asymp}
\sup_{\vartheta \in \mathbb{K}} \mathcal{J}_T(\vartheta) \mathbf{E}_{\vartheta} \left( \overline{\vartheta}_T - \vartheta \right)^2 =1+o(1) \,,
\end{equation}
as $T \rightarrow \infty$. 

As the optimal input does not depend on $\vartheta$ (see Proposition~\ref{thm1}),  a possible candidate is the Maximum Likelihood Estimator (MLE) $\hat{\vartheta}_T$, defined as the maximizer of
the likelihood:
\begin{equation}\nonumber
\hat{\vartheta}_T = \mbox{arg}\max_{\vartheta>0} {\cal L}(\vartheta,X^{T}).
\end{equation}
We want to find the asymptotical normality of the MLE of $\vartheta$.

The interest to mixed fractional Brownian motion was triggered by Cheridito\cite{cheridito01}. The resent works of Cai, Chigansky, Kleptsyna and Marushkevych  (\cite{CCK16}  \cite{Maru} \cite{CK18} \cite{CK19}) present a great value for the purpose of this paper.  The process $\xi_t$ satisfies a number of curious properties with applications in mathematical finance, see \cite{Christian}. In particular, as shown in \cite{cheridito01, cheridito03} , it is a semimartingale if and only if $H \in \{\frac{1}{2}\}\bigcup (\frac{3}{4},\,1]$ and the measure $\mu^{\xi}$ induced by $\xi$ on the space of continuous functions on $[0,\,T]$, is equivalent to the standard Wiener measure  $\mu^B$ for $H>\frac{3}{4}$. On the other hand,  $\mu^{\xi}$ and $\mu^{B^H}$ are equivalent if and only if $H<\frac{1}{4}$.

The paper falls into four parts. In the second part, we present some main results of this paper and the third part will contribute to the proofs of the main results. Some Lemmas will be given in Appendix.

\section{Main Results}

\subsection{Transformation of the model}

Even if the mixed fractional Brownian motion $\xi$ is a semimartingale when $H>\frac{3}{4}$, it is hard to write the likelihood function directly. We will try to transform our model with the fundamental martingale in \cite{CCK16} and get the explicit representation of the likelihood function.  In what follows, all random variables and processes are defined on a given stochastic basis $(\Omega,\mathcal{F},(\mathcal{F}_t)_{t \geq 0},\mathbf{P})$ satisfying the usual conditions and processes are $(\mathcal{F}_t)-$ adapted. Moreover the {\it natural filtration} of a process is understood as the $\mathbf{P}$-completion of the filtration generated by this process. 

From the canonical innovation representation in \cite{CCK16},  the fundamental martingale is defined as $M_t=\mathbf{E}(B_t|\mathcal{F}_t^{\xi})$,  $t\in [0,\,T]$, then for $H>1/2$ this martingale satisfies 
\begin{equation}\label{def:mar}
M_t=\int_0^t g(s,t)d\xi_s ,\,\,\,\,\,\,\,\,\,    \langle M\rangle_t=\int_0^t g(s,t)ds
\end{equation}
where $g(s,t)$ is the solution of the integro-differential equation
\begin{equation}\label{int-diff}
g(s,t)+H\frac{d}{ds} \int_0^tg(r,t)|r-s|^{2H-1} \textrm{sign}(s-r)dr=1,\, 0<s\leq t \leq T
\end{equation}

Following from \cite{CCK16}, let us introduce a process $Z=(Z_t,\, 0\leq t\leq T)$ the fundamental semimartingale associated to $X$, defined as 
$$
Z_t=\int_0^t g(s,t) dX_s.
$$
Note that $X$ can be represented as $X_t=\int_0^t \hat{g}(s,t)dZ_s$ where 
\begin{equation}\label{hat g}
\hat{g}(s,t)=1-\frac{d}{d\langle M\rangle_s}\int_0^t g(r,s)dr
\end{equation}
for $0\leq s\leq t$ and there for the nature filtration of X and Z coincide.  Moreover, we have the following representations:
\begin{equation}\label{eq:Z}
dZ_t=-\vartheta Q_td\langle M\rangle_t+v(t)d\langle M\rangle_t+dM_t,
\end{equation}
where 
\begin{equation}\label{eq:Q and v}
Q_t=\frac{d}{d\langle M\rangle_t}\int_0^t g(s,t)X_sds,\, v(t)=\frac{d}{d\langle M\rangle_t}\int_0^t g(s,t)u(s)ds.
\end{equation}
\subsection{MLE of with the Constant Drift}
Let us consider $u(t)=\alpha$ a constant not $0$.  In this case we will denote the processes $X$, $Z$, $Q$ by $X^{\alpha}$, $Z^{\alpha}$ and $Q^{\alpha}$ and it is not hart to find that the MLE of the unknown parameter $\vartheta$ is 
\begin{equation}\label{eq: tilde beta}
\hat{\vartheta}^{\alpha}_T=\frac{\int_0^T \alpha Q^{\alpha}_t d\langle M\rangle_t-\int_0^T Q^{\alpha}_tdZ^{\alpha}_t}{\int_0^T (Q_t^{\alpha})^2d\langle M\rangle_t}
\end{equation}
where 
\begin{equation}\label{eq: Zt alpha}
dZ^{\alpha}_t=(\alpha-\vartheta Q^{\alpha}_t)d\langle M\rangle_t+dM_t,\, t\in [0,T].
\end{equation}
From \cite{CHX20} the estimation error can be presented by 
\begin{equation}\label{eq: tilde beta error}
\hat{\vartheta}^{\alpha}_T-\vartheta=\frac{\int_0^T Q^{\alpha}_t dM_t}{\int_0^T (Q_t^{\alpha})^2d\langle M\rangle_t}.
\end{equation}
and we have:
\begin{thm}\label{th Laplace Transform}
For $H>1/2$, 
$$
\sqrt{T}(\hat{\vartheta}^{\alpha}_T-\vartheta)\xrightarrow{d}\mathcal{N}(0,2\vartheta)
$$
and for $H<1/2$,
$$
\sqrt{T} (\hat{\vartheta}^{\alpha}_T-\vartheta)\xrightarrow{d} \mathcal{N}\left(0, \frac{2\vartheta^2}{2\alpha^2+\vartheta}\right)
$$
\end{thm}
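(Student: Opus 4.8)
The plan is to exploit the explicit error representation \eqref{eq: tilde beta error} and establish a central limit theorem for the martingale $\int_0^T Q^{\alpha}_t\,dM_t$ together with a law of large numbers for the quadratic variation $\int_0^T (Q_t^{\alpha})^2\,d\langle M\rangle_t$, and then conclude by a version of the martingale CLT (stable convergence / Toeplitz argument as in \cite{Liptser}). Since $Q_t^\alpha$ can be decomposed according to \eqref{eq: Zt alpha} into a term driven by the deterministic input $\alpha$ and a term driven by the stationary-like fluctuation coming from $dM_t$, I would first write $Q_t^\alpha = \alpha\, a(t) + R_t$, where $a(t)$ is the deterministic solution of the corresponding linear equation in $\langle M\rangle$-time and $R_t$ is the centered Gaussian part. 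The key analytic input is the asymptotics of $\langle M\rangle_T$ as $T\to\infty$ and of the kernel $g(s,t)$; from \cite{CCK16} one has $\langle M\rangle_T \sim c_H T^{2-2H}$, so the time-change $\tau = \langle M\rangle_t$ behaves very differently for $H>1/2$ (where $2-2H<1$) than for $H<1/2$ (where $2-2H>1$), and this dichotomy is exactly what produces the two different limiting variances.

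The main computational step is to identify $\lim_{T\to\infty} \frac{1}{T}\int_0^T (Q_t^\alpha)^2\,d\langle M\rangle_t$. I expect that for $H>1/2$ the deterministic part $\alpha\,a(t)$ is asymptotically negligible relative to the stochastic part after normalization, so the limit coincides with the one in the driftless mixed-fOU problem of \cite{CK19}, giving $\frac{1}{2\vartheta}$ and hence the asymptotic variance $2\vartheta$; whereas for $H<1/2$ the deterministic part contributes at the same order, and a careful evaluation of both contributions yields the limit $\frac{2\alpha^2+\vartheta}{2\vartheta^2}$. Concretely I would pass to the $\langle M\rangle$-time, write $Q$ as the solution of a stationary Ornstein--Uhlenbeck-type equation in that clock, and use ergodic/stationarity arguments (or direct Laplace-transform computations, as the abstract advertises) to compute $\Es_\vartheta (Q_t^\alpha)^2$ and integrate. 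For the martingale numerator, one checks $\langle \int_0^\cdot Q^\alpha\,dM\rangle_T = \int_0^T (Q_t^\alpha)^2 d\langle M\rangle_t$, so the same limit controls it, and the Lindeberg/Lyapunov condition follows from Gaussianity and moment bounds.

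The hard part will be making the $H<1/2$ asymptotics rigorous: there $\langle M\rangle_t$ grows super-linearly, the fundamental martingale is "more singular", and one must control the interaction between the deterministic forcing $v(t)$ (the $\langle M\rangle$-density of $\int_0^t g(s,t)\alpha\,ds$) and the stationary fluctuations, keeping track of the precise constant $2\alpha^2+\vartheta$. I would handle this by reducing everything to Laplace transforms of the relevant quadratic functionals of the Gaussian process $Z^\alpha$ — using the Cameron--Martin / Girsanov formula to express $\Es_\vartheta\exp\{-\lambda\int_0^T(Q^\alpha_t)^2 d\langle M\rangle_t\}$ in terms of a Riccati equation — and then extracting the large-$T$ logarithmic asymptotics of that transform, which pins down both the normalization rate $\sqrt{T}$ and the limiting variance. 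The remaining steps (Slutsky, uniform integrability for replacing $\hat\vartheta_T^\alpha$ by its linearization, and the stable martingale CLT) are then routine.
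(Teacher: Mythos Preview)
Your overall strategy coincides with the paper's: apply the martingale CLT to the numerator in \eqref{eq: tilde beta error} and identify the limit of $\frac{1}{T}\int_0^T (Q_t^\alpha)^2\,d\langle M\rangle_t$ via the Laplace transform
\[
\mathcal{L}^\alpha_T(\mu)=\mathbf{E}_\vartheta\exp\Bigl(-\tfrac{\mu}{T}\int_0^T (Q_t^\alpha)^2\,d\langle M\rangle_t\Bigr),
\]
computed through the Cameron--Martin/Kleptsyna--Le~Breton formula and a Riccati equation. The paper does exactly this (Lemma~\ref{lem Laplace Transform}): it lifts $Q^\alpha$ to the two-dimensional process $\zeta^\alpha$, splits the exponent into a trace term handled by \cite{CK19} and a mean term, and shows the remainder is $o(1)$.

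There is, however, one concrete error in your proposal that would derail the case $H<1/2$. The asymptotic $\langle M\rangle_T\sim c_H T^{2-2H}$ that you quote holds only for $H>1/2$; for the mixed fBm with $H<1/2$ one has instead $\langle M\rangle_T/T\to 1$ (the Brownian component governs the innovation bracket, see \cite{CK19}). If $\langle M\rangle_T$ really grew like $T^{2-2H}$ with $2-2H>1$, the deterministic contribution $(\alpha/\vartheta)^2\,\langle M\rangle_T/T$ would diverge and the $\sqrt{T}$-rate in the theorem would be wrong. With the correct asymptotic the ``hard part'' you anticipate is in fact immediate: since $\mathbf{E}X_t^\alpha=(\alpha/\vartheta)(1-e^{-\vartheta t})$, one obtains directly
\[
\frac{1}{T}\int_0^T(\mathbf{E}Q_t^\alpha)^2\,d\langle M\rangle_t\;\longrightarrow\;\Bigl(\frac{\alpha}{\vartheta}\Bigr)^{2}\lim_{T\to\infty}\frac{\langle M\rangle_T}{T}
=\begin{cases}0,&H>1/2,\\ (\alpha/\vartheta)^2,&H<1/2,\end{cases}
\]
which, combined with the value $1/(2\vartheta)$ for the centered part from \cite{CK19}, yields the two limiting variances without any ergodic argument in $\langle M\rangle$-time.
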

In fact we have proved this result in \cite{CHX20} but here we want to use the general Laplace Transform (Cameron-Martin formula) which will also be presented in the following optimal input case.  We can find the proof in Section \ref{sec proof}. On the other hand we can also complete it same as the following theorem \ref{thm2}.

\subsection{Optimal Input Case}
\subsubsection{Two dimensional observation}
First of all, let us define the space of control for $v(t)$:
\begin{equation}\label{space v}
\mathcal{V}_T=\left\{h \Big{|}     \frac{1}{T}\int_0^T|v(t)|^2d\langle M\rangle_t \leq 1 \right\}.
\end{equation}
Remark that with \eqref{eq:Q and v} the following relationship between control $u$ and its transformation $v$ holds:
\begin{equation}\label{eq:u}
u(t)=\frac{d}{dt}\int_0^t \hat{g}(t,s)v(s)d\langle M\rangle_s
\end{equation}
we can set the admissible control as $\mathcal{U}_T=\{u|v\in \mathcal{V}_T\}$. Note that these set are non-empty. 

From \cite{CK19}, we know $Q_t=\int_0^t \psi(s,t)dZ_s$ where
\begin{equation}\label{psi}
\psi(s,t)=\frac{1}{2}\left(\frac{dt}{d\langle M\rangle_t}   +\frac{ds}{d\langle M\rangle_s}\right).
\end{equation}
Moreover, $Q_t=\frac{1}{2}\ell(t)^*\zeta_t$, where $\ell(t)=\left(\begin{array}{c} \psi(t,t) \\  1\end{array}\right)$, $*$ standing for the transposition and $\zeta=(\zeta_t,\,t\geq 0)$ is the solution of the stochastic differential equation
\begin{equation}\label{eq:zeta}
d\zeta_t=-\frac{\vartheta}{2}A(t)\zeta_td\langle M\rangle_t+b(t)v(t)d\langle M\rangle_t+b(t)dM_t, \zeta_0=\mathbf{0}_{2\times 1}, 
\end{equation}
with
\begin{equation}\label{matrix A b}
A(t)=\left(\begin{array}{cc} \psi(t,t) &  1 \\  \psi^2(t,t)& \psi(t,t) \end{array}\right),\,
b(t)=\left(\begin{array}{c} 1 \\ \psi(t,t) \end{array}\right).
\end{equation}

\subsubsection{Likelihood function and Fisher information}
The classical Girsanov theorem gives 
\begin{equation}\label{eq: likelihood function without}
\mathcal{L}(\vartheta,\, Z^T)=\mathbf{E}_{\vartheta}\exp \left\{-\int_0^T(-\vartheta Q_t+v(t))dZ_t- \frac{1}{2}\int_0^T(-\vartheta Q_t+v(t))^2d\langle M\rangle_t\right\},
\end{equation}
then the Fisher information stands for 
\begin{eqnarray*}
% \nonumber to remove numbering (before each equation)
  \mathcal{I}_T(\vartheta, v) &=& - \mathbf{E}_{\vartheta}\frac{\partial ^2}{\partial \vartheta^2}\ln\mathcal{L}(\vartheta, Z^{T}) \\
   &=& \frac{1}{4}\mathbf{E}_{\vartheta}\int_0^T\left(\ell(t)^*\zeta_t\right)^2d\langle M\rangle.
\end{eqnarray*}
Then we have the following results for the optimal input:
\begin{thm}\label{thm1}
The asymptotic optimal input in the class of controls $\mathcal{U}_T$ is $u_{opt}(t)=\frac{d}{dt}\int_0^t\hat{g}(s,t)\psi(s,s)d\langle M\rangle_s$ where $\hat{g}(s,t)$,  $\psi(s,t)$, $\langle M\rangle_t$ are defined in \eqref{def:mar}, \eqref{hat g}, \eqref{psi}. Moreover,
$$
\lim_{T \to + \infty}\frac{\mathcal{J}_T(\vartheta)}{T}=\mathcal{I}(\vartheta),
$$
where
\begin{equation}\label{fisher}
\mathcal{I}(\vartheta)=\frac{1}{2\vartheta}+\frac{1}{\vartheta^2}.
\end{equation}
The $\mathcal{J}_T(\vartheta)$ is defined in \eqref{eq: J max}.
\end{thm}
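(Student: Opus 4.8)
The plan is to split the Fisher information $\mathcal{I}_T(\vartheta,v)$ into a part that carries the control and a part that does not, to dispatch the latter by Theorem~\ref{th Laplace Transform}, and to solve a linear--quadratic problem for the former. Because the system \eqref{eq:zeta} is linear and $Q_t=\tfrac12\ell(t)^{*}\zeta_t$, under $\mathbf{P}_{\vartheta}$ the vector $\zeta_t$ is Gaussian with mean $\mu_t=\mathbf{E}_{\vartheta}\zeta_t$ --- the solution of \eqref{eq:zeta} without the $b(t)\,dM_t$ term --- and with a covariance $P_t$ that does not depend on $v$, being driven only by $b(t)\,dM_t$. Hence $\mathbf{E}_{\vartheta}\bigl(\ell(t)^{*}\zeta_t\bigr)^2=\ell(t)^{*}P_t\ell(t)+\bigl(\ell(t)^{*}\mu_t\bigr)^2$, and with $q_t=\tfrac12\ell(t)^{*}\mu_t$ this yields
\begin{equation}\label{eq:splitI}
\mathcal{I}_T(\vartheta,v)=\mathcal{I}_T(\vartheta,0)+\int_0^T q_t^2\,d\langle M\rangle_t,
\end{equation}
so that $\mathcal{J}_T(\vartheta)=\mathcal{I}_T(\vartheta,0)+\sup_{v\in\mathcal{V}_T}\int_0^T q_t^2\,d\langle M\rangle_t$ and the two summands may be handled in isolation.

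For the control--free term, $\mathcal{I}_T(\vartheta,0)=\mathbf{E}_{\vartheta}\int_0^T Q_t^2\,d\langle M\rangle_t$ is the Fisher information of the driftless model. By \eqref{eq: tilde beta error} with $\alpha=0$ its MLE error is $N_T/\langle N\rangle_T$ with $N_T=\int_0^T Q_t\,dM_t$ a martingale and $\langle N\rangle_T=\int_0^T Q_t^2\,d\langle M\rangle_t$; the normalised bracket $\tfrac1T\langle N\rangle_T$ converges (this is part of the proof of Theorem~\ref{th Laplace Transform}), so by the martingale central limit theorem its limit $c$ must satisfy $1/c=2\vartheta$, the asymptotic variance at $\alpha=0$ in both ranges of $H$; with uniform integrability from the moment bounds in \cite{CK19}, $\tfrac1T\,\mathcal{I}_T(\vartheta,0)=\tfrac1T\mathbf{E}_{\vartheta}\langle N\rangle_T\to\tfrac1{2\vartheta}$.

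For the signal term, $v\mapsto q$ is a bounded linear map on $L^2\bigl([0,T],d\langle M\rangle_t\bigr)$, so $v\mapsto\int_0^T q_t^2\,d\langle M\rangle_t=\langle v,\mathbf{K}_T v\rangle$ is a nonnegative quadratic form with self--adjoint compact $\mathbf{K}_T$, whence $\sup_{v\in\mathcal{V}_T}\int_0^T q_t^2\,d\langle M\rangle_t=T\,\lambda_{\max}(\mathbf{K}_T)$. I would evaluate $\lambda_{\max}(\mathbf{K}_T)$ by exploiting the identity $A(t)=b(t)\ell(t)^{*}$ and the normalisation $\psi(t,t)\,d\langle M\rangle_t=dt$ from \eqref{psi} to reduce $q$ to the scalar equation $\dot q_t=-\vartheta q_t+v(t)+\tfrac12\dot\psi(t,t)(\mu_t)_1$, and then passing to the limit in the kernel of $\mathbf{K}_T$ with the large--time asymptotics of $g$, $\psi$ and $\langle M\rangle$ from \cite{CK19} (the ``extra part''), which behave quite differently for $H<1/2$ and $H>1/2$ --- the clock $\langle M\rangle$ grows linearly in the first case and sublinearly in the second, so that the correction term $\tfrac12\dot\psi(t,t)(\mu_t)_1$ is asymptotically negligible when $H<1/2$ but not when $H>1/2$. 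In both regimes this should give $\lambda_{\max}(\mathbf{K}_T)\to\vartheta^{-2}$, with a maximising family of controls asymptotically proportional to $\psi(\cdot,\cdot)$; transporting it through \eqref{eq:u} produces the stated $u_{opt}$. Combined with \eqref{eq:splitI} and the previous paragraph, $\mathcal{J}_T(\vartheta)/T\to\tfrac1{2\vartheta}+\tfrac1{\vartheta^{2}}=\mathcal{I}(\vartheta)$.

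The hard part is this last step: turning the reduction to $\mathbf{K}_T$ into quantitative estimates with genuinely $o(T)$ remainders (so that $\sup_{v\in\mathcal{V}_T}$ and $\lim_{T}$ commute), controlling the correction term uniformly in $t$, running the argument through both ranges of $H$ where the fundamental martingale behaves very differently, and checking that $v=\psi(\cdot,\cdot)$ --- suitably renormalised so as to lie in $\mathcal{V}_T$ --- actually attains the bound. All of this relies on the fine asymptotics of the fundamental martingale from \cite{CK19}; the Laplace transform / Cameron--Martin computations then merely serve to evaluate the Gaussian expectations that appear along the way.
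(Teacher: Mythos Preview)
Your overall architecture matches the paper's exactly: decompose $\mathcal{I}_T(\vartheta,v)$ into a centered (control--free) part and the mean (signal) part, dispatch the first via \cite{CK19}, and write the second as a quadratic form whose top eigenvalue must be shown to tend to $1/\vartheta^2$. The lower bound is also obtained the same way, by plugging in an explicit candidate --- note, however, that the paper uses $v_{opt}(t)=\sqrt{\psi(t,t)}$ (which sits exactly on the boundary of $\mathcal{V}_T$ because $\psi(t,t)\,d\langle M\rangle_t=dt$), not $\psi(t,t)$; the discrepancy with the statement of the theorem is a typo there.

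The genuine gap is the \emph{upper bound} $\lambda_{\max}(\mathbf{K}_T)\le 1/\vartheta^2$. Your plan --- reduce $q$ to the scalar ODE $\dot q_t=-\vartheta q_t+v(t)+\tfrac12\dot\psi(t,t)(\mu_t)_1$ and then ``pass to the limit in the kernel'' --- you yourself identify as the hard part, and with reason: the correction term couples back into the full $2\times2$ system, and kernel asymptotics alone do not give a uniform bound on the top eigenvalue. The paper sidesteps this entirely with precisely the device you relegate to a side remark: the Cameron--Martin/Laplace transform is not used to ``evaluate Gaussian expectations along the way'' but is the \emph{main engine} for the spectral bound. Concretely, one realises $\mathcal{K}_T$ as the covariance operator of an auxiliary Gaussian process $\mathcal{X}_t=\tfrac12\,\xi_t b(t)/\sqrt{\psi(t,t)}$, where $\xi$ is defined by a \emph{backward} It\^o integral so that $-d\xi_t=-\tfrac{\vartheta}{2}\xi_tA(t)\,d\langle M\rangle_t+\ell(t)^*\psi(t,t)^{-1/2}\odot dW_t$ with $\xi_T=0$. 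Then $L_T(a)=\mathbf{E}\exp\bigl(-a\int_0^T\mathcal{X}_t^2\,dt\bigr)=\prod_{i\ge1}(1+2a\nu_i(T))^{-1/2}$ is computed through a matrix Riccati equation that linearises (via the Kronecker structure $\Upsilon\otimes A(t)$) and whose determinant is shown to stay positive for every $a>-\vartheta^2/2$. Finiteness of $L_T(a)$ on this range forces $\nu_1(T)\le 1/\vartheta^2$ \emph{for all $T$}, with no fine asymptotics of $g$ or $\psi$ needed and both regimes of $H$ covered at once. This spectral--gap--via--Laplace argument (after \cite{BKP12,BC13}) is the missing key idea in your proposal.
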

 
\subsubsection{Asymptotical Normality of The MLE}
From the theorem \ref{thm1}, we can see that the optimal input $u_{opt}(t)$ does not depend on the unknown parameter $\vartheta$, we can easily obtain the estimator error of the MLE of the  $\hat{\vartheta}_T$:
\begin{equation}\label{eq: MLE error}
\hat{\vartheta}_T-\vartheta=\frac{\int_0^T Q_t dM_t}{\int_0^T Q_t^2d\langle M\rangle_t}.
\end{equation} 
Then, the MLE reaches efficiency and we deduce its large sample asymptotic properties:
\begin{thm}\label{thm2} 
The MLE is uniformly consistent on compacts $\
K \subset \mathbb{R}_*^+$, {\it i.e.} for any $\nu >0$,
\begin{equation}\nonumber
\lim_{T\rightarrow \infty} \sup_{\vartheta \in \mathbb{K}}\Pro_\vartheta^{T} \left\{\left|\MLE_T-\vartheta \right|> \nu \right\}=0\,,
\end{equation}
uniformly on compacts asymptotically normal: as $T$ tends to $+\infty$,
\begin{equation}\label{eq:asympnormal}\nonumber
\lim_{T\rightarrow\infty}  \sup_{\vartheta \in \mathbb{K}} \left| \Es_\vartheta f\left( \sqrt{T} \left( \hat{\vartheta}_T - \vartheta\right) \right) - \Es f(\eta) \right| =0\quad \forall f \in {\cal C}_b
\end{equation}
%\stackrel{law}{\Longrightarrow} {\cal N}\left(0,\frac{\vartheta^4}{\mu^2}\right) $$
and $\xi$ is a zero mean Gaussian random variable of variance ${\cal I}(\vartheta)^{-1}$ (see  \eqref{fisher} for the explicit value)
{\bf which does not depend on $H$}
 and we have the uniform on $\vartheta\in \mathbb{K}$ convergence of the moments: for any $p>0$,
\begin{equation}\nonumber
\lim_{T\rightarrow \infty}  \sup_{\vartheta \in \mathbb{K}} \left| \Es_\vartheta \left|\sqrt{T} \left( \MLE_T-\vartheta\right) \right |^p-  \Es \left|\eta\right|^p \right| =0.
\end{equation}
Finally, the MLE is efficient in the sense of \eqref{asymp}.
\end{thm}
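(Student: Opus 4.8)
The plan is to work entirely from the exact error representation \eqref{eq: MLE error}, which writes $\hat\vartheta_T-\vartheta$ as the ratio of the continuous martingale $N_T:=\int_0^T Q_t\,dM_t$ to its predictable bracket $I_T:=\langle N\rangle_T=\int_0^T Q_t^2\,d\langle M\rangle_t$, the latter being at the same time the observed Fisher information. Everything in the statement reduces to (i) the asymptotics of $I_T/T$, (ii) the martingale central limit behaviour of $N_T/\sqrt T$, and (iii) uniform-in-$\vartheta$ versions of both, carrying enough exponential control to yield large deviations and uniform integrability.

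First I would prove that $I_T/T\to\mathcal I(\vartheta)$ in probability, with $\mathcal I(\vartheta)=\tfrac1{2\vartheta}+\tfrac1{\vartheta^2}$ as in \eqref{fisher}. Using $Q_t=\tfrac12\ell(t)^*\zeta_t$ and the two-dimensional linear SDE \eqref{eq:zeta} for $\zeta_t$, together with the large-$t$ asymptotics of $\langle M\rangle_t$ and $\psi(t,t)$ from \cite{CCK16} (which make the coefficients $A(t),b(t)$ asymptotically constant), $\zeta$ behaves, in the clock $\langle M\rangle$, like an ergodic Ornstein--Uhlenbeck-type process; computing its stationary second moment and integrating produces exactly the constant in \eqref{fisher}. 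The uniformity needed later is cleanest through the Laplace-transform / Cameron--Martin route used for Theorem~\ref{th Laplace Transform}: one computes $\mathbf E_\vartheta\exp\{\lambda N_T-\mu I_T\}$ explicitly by a Girsanov change of measure, which reduces to solving a scalar Riccati ODE along the bracket clock; the exponential growth rate of this transform gives the law of large numbers for $I_T/T$, and differentiating at the origin yields uniform bounds on all moments of $N_T/\sqrt T$ and $I_T/T$. This is the step that uses the extra part from \cite{CK19}.

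Given these ingredients, pointwise asymptotic normality $\sqrt T(\hat\vartheta_T-\vartheta)\Rightarrow\mathcal N(0,\mathcal I(\vartheta)^{-1})$ follows from the CLT for continuous martingales: since $\langle N/\sqrt T\rangle_T=I_T/T\to\mathcal I(\vartheta)$, we get $N_T/\sqrt T\Rightarrow\mathcal N(0,\mathcal I(\vartheta))$ and Slutsky gives the claim with $\eta\sim\mathcal N(0,\mathcal I(\vartheta)^{-1})$; as $\mathcal I(\vartheta)$ carries no $H$, neither does the limit. Uniform consistency on a compact $\mathbb K\subset\mathbb R_*^+$ comes from a Chernoff bound: $\{|\hat\vartheta_T-\vartheta|>\nu\}\subset\{|N_T|>\nu I_T\}$, and using the explicit Laplace transform together with a lower bound $I_T\ge cT$ (for some $c>0$ uniform on $\mathbb K$, with high probability) makes this probability decay exponentially in $T$, uniformly over $\vartheta\in\mathbb K$. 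The uniform CLT, i.e.\ convergence of $\mathbf E_\vartheta f(\sqrt T(\hat\vartheta_T-\vartheta))$ for $f\in\mathcal C_b$, and the uniform convergence of moments then follow from the uniform control of the Laplace transform near the origin --- which furnishes a family of laws that is uniformly tight with uniformly integrable $p$-th powers --- combined with the identification of the limit, continuous in $\vartheta$. Finally, efficiency in the sense of \eqref{asymp} is immediate: by Theorem~\ref{thm1}, $\mathcal J_T(\vartheta)/T\to\mathcal I(\vartheta)$ uniformly on $\mathbb K$, while the moment convergence gives $T\,\mathbf E_\vartheta(\hat\vartheta_T-\vartheta)^2\to\mathcal I(\vartheta)^{-1}$ uniformly on $\mathbb K$, so the product tends to $1$.

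The hard part will be the uniform-in-$\vartheta$ control rather than the pointwise statements, which are classical once the fundamental-martingale reduction is in place. Concretely, one must show that the solution of the Riccati ODE (equivalently, the Laplace transform of the pair $(N_T,I_T)$) admits bounds uniform over $\vartheta$ in a compact set and over a range of transform parameters including a neighbourhood of the origin; this simultaneously delivers the uniform exponential decay needed for consistency and the uniform integrability needed for moment convergence. A secondary difficulty is that the coefficients $A(t),b(t)$ in \eqref{eq:zeta} are only asymptotically constant, so one has to verify that replacing them by their limits is negligible at the $1/T$ scale and does not destroy uniformity.
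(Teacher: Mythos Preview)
Your proposal is essentially the paper's approach: reduce everything to the martingale error representation \eqref{eq: MLE error}, establish the limit of the bracket $I_T/T$ via the Laplace transform of $\int_0^T Q_t^2\,d\langle M\rangle_t$ computed through a Riccati-type equation (the paper's Lemma~\ref{Lapalace asymptotical}, built on \cite{Klep01} and \cite{CK19}), and conclude by the martingale CLT. The only notable difference is that you work with the joint transform $\mathbf E_\vartheta\exp\{\lambda N_T-\mu I_T\}$ and spell out the uniformity-in-$\vartheta$ and moment-convergence arguments explicitly, whereas the paper's own proof is terse and simply invokes Lemma~\ref{Lapalace asymptotical} without detailing how uniformity on compacts or convergence of moments follow; in that sense your outline is more complete than the paper's.
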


\begin{thm}\label{thm strong consistency}
The MLE $\hat{\vartheta}_T$ is strong consistency that is 
$$
\hat{\vartheta}_T\xrightarrow {a.s.} \vartheta,\,\,\,\, T\rightarrow \infty.
$$
\end{thm}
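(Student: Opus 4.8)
The plan is to work from the exact error representation \eqref{eq: MLE error}. Writing
$$
N_T=\int_0^T Q_t\,dM_t,\qquad D_T=\int_0^T Q_t^2\,d\langle M\rangle_t,
$$
we have $\hat{\vartheta}_T-\vartheta=N_T/D_T$. Under $\mathbf{P}_\vartheta$ the process $M$ is a continuous martingale with bracket $\langle M\rangle$ and $Q$ has locally bounded $(\mathcal{F}_t)$-adapted trajectories, so $N=(N_T)_{T\ge0}$ is a continuous local martingale with quadratic variation $\langle N\rangle_T=D_T$. By the Dambis–Dubins–Schwarz theorem there is a standard Brownian motion $\beta$ with $N_T=\beta_{D_T}$; since $\beta_s/s\to0$ a.s. as $s\to\infty$, on the event $\{D_\infty=+\infty\}$ we get $N_T/D_T\to 0$ a.s. Thus the whole statement reduces to proving that $D_T\to+\infty$ almost surely.

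Since $T\mapsto D_T$ is nondecreasing, it converges a.s. in $[0,+\infty]$ to a limit $D_\infty$, and to identify $D_\infty=+\infty$ it is enough to show $D_T\to+\infty$ in $\mathbf{P}_\vartheta$-probability. This follows from $\tfrac1T D_T\xrightarrow{\ \mathbf{P}_\vartheta\ }\mathcal{I}(\vartheta)$ with $\mathcal{I}(\vartheta)=\tfrac1{2\vartheta}+\tfrac1{\vartheta^2}>0$ as in \eqref{fisher}: at the level of expectations this is the identity $\mathbf{E}_\vartheta D_T=\mathcal{I}_T(\vartheta,u_{opt})$ (recall $Q_t=\tfrac12\ell(t)^*\zeta_t$) combined with Theorem~\ref{thm1}, while the convergence in probability of the normalisation $D_T/T$ is precisely what is used in the proof of Theorem~\ref{thm2} to turn the martingale central limit theorem for $N_T/\sqrt T$ into the announced limit variance $\mathcal{I}(\vartheta)^{-1}$. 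Feeding $D_\infty=+\infty$ a.s. back into the first paragraph yields $\hat{\vartheta}_T-\vartheta=N_T/D_T\to 0$ a.s., i.e. strong consistency. (The constant-drift estimator $\hat{\vartheta}^{\alpha}_T$ is handled identically starting from \eqref{eq: tilde beta error}.)

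The one substantive point is the almost sure divergence of $D_T$, equivalently a lower bound $\liminf_{T\to\infty}D_T/T>0$ a.s.; the route above offloads it onto a fact already needed for Theorem~\ref{thm2}. For a self-contained argument the obstacle is to control the quadratic functional $D_T=\tfrac14\int_0^T(\ell(t)^*\zeta_t)^2d\langle M\rangle_t$ of the Gaussian process $\zeta$ solving the linear equation \eqref{eq:zeta}. This can be done either by an ergodic-type argument after rescaling time by the martingale clock $\langle M\rangle_t$, in which \eqref{eq:zeta} has constant coefficients, or by a second-moment estimate of the form $\mathrm{Var}_\vartheta(D_T)=O(T)$ (via Isserlis' formula and the decay of the relevant covariance kernels, using the large-$t$ asymptotics of $\langle M\rangle_t$, $\psi(t,t)$ and $\hat g$ from \cite{CCK16,CK19}), which by Chebyshev's inequality along $T_n=n^2$ and Borel–Cantelli gives $D_{T_n}/T_n\to\mathcal{I}(\vartheta)$ a.s., upgraded to $D_T/T\to\mathcal{I}(\vartheta)$ a.s. by monotonicity of $D$ and $T_{n+1}/T_n\to1$. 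I expect the probability-convergence route to be the shortest to write down rigorously, with the genuine work confined to the kernel asymptotics borrowed from \cite{CK19}.
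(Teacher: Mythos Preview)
Your reduction is exactly the paper's: write $\hat{\vartheta}_T-\vartheta=N_T/D_T$ with $N$ a continuous local martingale and $\langle N\rangle_T=D_T$, invoke the martingale strong law, and reduce everything to $D_T\to\infty$ a.s. The paper then establishes $D_\infty=+\infty$ via a separate log--Laplace asymptotic (its Lemma~\ref{Laplace log}), namely $-\tfrac{1}{T}\log \mathbf{E}_\vartheta e^{-\mu D_T}\to c(\mu)>0$ for suitable $\mu>0$, which forces $\mathbf{E}_\vartheta e^{-\mu D_T}\to 0$ and hence (by monotonicity and dominated convergence) $D_\infty=+\infty$ a.s.

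Your route is a mild but genuine simplification: instead of proving a new large--deviations--type lemma, you recycle Lemma~\ref{Lapalace asymptotical} (already used for Theorem~\ref{thm2}), which gives $\mathcal{L}_T(\mu)\to e^{-\mu\mathcal{I}(\vartheta)}$ and therefore $D_T/T\to\mathcal{I}(\vartheta)>0$ in probability; monotonicity of $D$ then upgrades this to $D_\infty=+\infty$ a.s. This is correct and shorter, at the cost of not yielding the sharper exponential decay that Lemma~\ref{Laplace log} provides. The alternative self--contained routes you sketch (variance bound plus Borel--Cantelli along $T_n=n^2$, or an ergodic argument after time--change) are sound in principle but unnecessary here given that the Laplace machinery is already in place.
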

\begin{remark}
This strong consistency is also true for $u(t)=\alpha$.
\end{remark}

\section{Proofs of Main Results}\label{sec proof}

\subsection{Proof of Theorem \ref{th Laplace Transform}}
In order to get the property of asymptotical normality of this estimation error, we always apply the central limit theorem of martingale (see \cite{HH80}) to try to find the limit of the denominator in probability. Different from the development with the mixed O-U process with $\alpha=0$, we compute the the limit of Laplace Transform $\mathcal{L}^{\alpha}_T(\mu)$:
$$
\mathcal{L}^{\alpha}_T(\mu)=\mathbf{E}\exp\left(-\frac{\mu}{T}\int_0^T (Q_t^{\alpha})^2d\langle M\rangle_t\right)
$$
The result is obvious from Lemma \ref{lem Laplace Transform} with the limit of this Laplace Transform.

\subsection{Proof of Theorem \ref{thm1}}
We will compute the Fisher information with the same method in \cite{BC13}, that is to separate the Fisher information into two parts, on into the control, the other without, we focus on the following decomposition:
\begin{eqnarray}\label{eq:separ}
% \nonumber to remove numbering (before each equation)
  \mathcal{I}_T(\vartheta,v) &=&\frac{1}{4}\mathbf{E}_{\vartheta}\left\{\int_0^T(\ell(t)^*\zeta_t-\mathbf{E}_{\vartheta}\ell(t)^*\zeta_t+\mathbf{E}_{\vartheta}\ell(t)^*\zeta_t)^2\right\} \nonumber\\
    &=& \mathcal{I}_{1,T}(\vartheta,v)+\mathcal{I}_{2,T}(\vartheta,v)
\end{eqnarray}
where
\begin{equation}\label{I 1}
\mathcal{I}_{1,T}(\vartheta,v)=\frac{1}{4}\int_0^T \mathbf{E}_{\vartheta}(\ell(t)^*\zeta_t-\mathbf{E}_{\vartheta}\ell(t)^*\zeta_t)^2\langle M\rangle_t
\end{equation}
and
\begin{equation}\label{I 2}
\mathcal{I}_{2,T}(\vartheta,v)=\frac{1}{4}\int_0^T (\ell(t)^*\mathbf{E}_{\vartheta}\zeta_t)^2d\langle M\rangle_t.
\end{equation}
The deterministic function $(\mathcal{P}(t)=\mathbf{E}_{\vartheta}\zeta_t,\, t\geq 0)$ satisfies the following equation:
\begin{equation}\label{eq: P}
\frac{d\mathcal{P}(t)}{d\langle M\rangle_t}=-\frac{1}{2}\vartheta A(t)\mathcal{P}(t)+b(t)v(t), \mathcal{P}(0)=\mathbf{0}_{2\times 1},
\end{equation}
at the same time the process $\overline{P}=(\overline{P}_t=\zeta_t-\mathbf{E}_{\vartheta}\zeta_t,\,t\geq 0)$ satisfies the following stochastic equation:
$$
d\overline{P}_t=-\frac{1}{2}\vartheta A(t)\overline{P}_td\langle M\rangle_t+b(t)dM_t, 
$$
which is just the $\zeta_t$ with $v(t)=0$ which can be found in \cite{CK19}.

With the technical separation of \eqref{eq:separ} and the precedent remarks, we have
$$
\mathcal{J}_T(\vartheta)=\mathcal{I}_{1,T}(\vartheta)+\mathcal{J}_{2,T}(\vartheta),
$$
where 
$$
\mathcal{J}_{2,T}(\vartheta)=\sup_{v\in \mathcal{V}_T}\mathcal{I}_{2,T}(\vartheta,\,v).
$$
From \cite{CK19}, we know 
$$
\lim_{T\to \infty}\frac{\mathcal{I}_{1,T}(\vartheta)}{T}=\frac{1}{2\vartheta},
$$
so we just need to check that  $\lim\limits_{T\to \infty}\frac{\mathcal{J}_{2,T}(\vartheta)}{T}=\frac{1}{\vartheta^2}$. From \eqref{eq: P}, we get
\begin{equation}\label{eq: P2}
\mathcal{P}(t)=\varphi(t)\int_0^t\varphi^{-1}(s)b(s)v(s)d\langle M\rangle_s,
\end{equation}
where $\varphi(t)$ is the matrix defined by
\begin{equation}\label{varphi}
\frac{d\varphi(t)}{d\langle M\rangle_t}=-\frac{\vartheta}{2}A(t)\varphi(t),\, \varphi(0)=\mathbf{Id}_{2\times 2}
\end{equation}
with $\mathbf{Id}_{2\times 2}$ the $2\times 2$ identity matrix. Substituting into \eqref{I 2}, we get
\begin{equation}\label{I 2 2}
\mathcal{I}_{1,T}(\vartheta,\,v)=\int_0^T \int_0^T\mathcal{K}_T(s,\,\sigma)\frac{1}{\sqrt{\psi(s,\,s)}}v(s)\frac{1}{\sqrt{\psi(\sigma,\,\sigma)}}v(\sigma)dsd\sigma,
\end{equation}
where the operator 
\begin{equation}\label{operator K}
\mathcal{K}_T(s,\,\sigma)=\int_{\max(s,\,\sigma)}^T\mathcal{G}(t,s)\mathcal{G}(t,\,\sigma)dt
\end{equation}
and
\begin{equation}\label{eq: G}
\mathcal{G}(t,\,\sigma)=\frac{1}{2}\left(\frac{1}{\sqrt{\psi(t,t)}}\ell(t)^*\varphi(t)\varphi^{-1}(\sigma)b(\sigma)\frac{1}{\sqrt{\psi(\sigma,\,\sigma)}}\right).
\end{equation}
Then
\begin{eqnarray}\label{eq:J2}
% \nonumber to remove numbering (before each equation)
  \mathcal{J}_{2,T}(\vartheta) &=& T \sup_{\widetilde{v} \in L^2[0,T], \|\widetilde{v}\|\leq 1}\int_0^T\int_0^T\mathcal{K}_T(s,\sigma)\widetilde{v}(s)\widetilde{v}(\sigma)dsd\sigma ,\nonumber \\
  &=& T \sup_{\widetilde{v} \in L^2[0,T],\|\widetilde{v}\|\leq 1}(\mathcal{K}_T\widetilde{v},\widetilde{v})
\end{eqnarray}
where $\widetilde{v}(s)=\frac{v(s)}{\sqrt{T}}\frac{1}{\sqrt{\psi(t,t)}}$ and $\|\bullet\|$ stands for the usual norm in $L^2[0,\,T]$. Thus, Lemma \ref{lemma1} completes our proof.

\subsection{Proof of Theorem \ref{thm2}}
Taking $v_{opt}(t)=\sqrt{\psi(t,t)}$ into the equation  \eqref{eq: likelihood function without}, then the likelihood function is 
$$
\mathcal{L}(\vartheta,\, Z^T)=\mathbf{E}_{\vartheta}\exp \left\{-\int_0^T(-\vartheta Q_t+v_{opt}(t))dZ_t- \frac{1}{2}\int_0^T(-\vartheta Q_t+v_{opt}(t))^2d\langle M\rangle_t\right\},
$$
then the maximum likelihood estimator (MLE) will be
\begin{equation}\label{rep MLE}
\hat{\vartheta}_T=\frac{\int_0^Tv_{opt}(t)Q_td\langle M\rangle_t-\int_0^TQ_tdZ_t}{\int_0^TQ_t^2d\langle M\rangle_t}
\end{equation} 
and the estimation error has the form
\begin{equation}\label{error}
\hat{\vartheta}_T-\vartheta=-\frac{\int_0^TQ_tdM_t}{\int_0^TQ_t^2d\langle M\rangle_t},
\end{equation}
just take attention that here the $Q_t $ will be with the relationship with $v_{opt}(t)$. Because $\int_0^t Q_sdMs,\, 0\leq t \leq T$ is a martingale and $\int_0^t Q_s^2d\langle M\rangle_s$ is its quadratic variation, In order to prove the Theorem \ref{thm2}, we only need to check the Laplace Transform of the quadratic variation and Lemma \ref{Lapalace asymptotical} achieves the proof.

\subsection{Proof of Theorem \ref{thm strong consistency}}
With the law of large numbers, in order to obtain the strong consistency of $\vartheta$, we only need to prove that 
\begin{equation}\label{eq: to infty mar}
\lim_{T\rightarrow \infty}\int_0^T Q_t^2d\langle M\rangle_t=+\infty
\end{equation}
or there exists a positive constant $\mu$ such that the limit of the Laplace Transform 
$$
\lim_{T\rightarrow \infty}\mathbf{E}\exp\left(-\mu \int_0^T Q_t^2d\langle M\rangle_t    \right)=0.
$$
In Lemma \ref{Laplace log} if we take a big enough $\mu>0$ such that the limit is negative (the $\mu$ can be easily found), then the equation \eqref{eq: to infty mar} is directly from this Lemma which implies the strong consistency.

\section{Appendix}
\begin{lem}\label{lemma1}
For the kernel $\mathcal{K}_T(s,\sigma)$  defined in equation \eqref{eq:J2} 
\begin{equation}\label{lim operator}
\lim_{T\rightarrow \infty}\sup_{\widetilde{v}\in L^2[0,T],\|\widetilde{v}\|\leq1}(\mathcal{K}_T\widetilde{v},\widetilde{v})=\frac{1}{\vartheta^2}
\end{equation}
with an optimal input $v_{opt}(t)=\sqrt{\psi(t,t)}$
\end{lem}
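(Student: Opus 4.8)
The plan is to analyze the asymptotics of the quadratic form $(\mathcal{K}_T\widetilde v,\widetilde v)$ by first understanding the kernel $\mathcal{G}(t,\sigma)$ of \eqref{eq: G} through the large-$t$ behavior of the fundamental matrix $\varphi(t)$ solving \eqref{varphi}. The key observation is that as $t\to\infty$, the time-change $\langle M\rangle_t$ grows linearly (this is the standard fact from \cite{CCK16, CK19} that underlies the $1/(2\vartheta)$ term in $\mathcal{I}_{1,T}$, so I may invoke it), and $\psi(t,t)\to$ a finite limit, so $A(t)$ and $b(t)$ converge to constant matrices $A_\infty$ and $b_\infty$. Consequently $\varphi(t)\varphi^{-1}(\sigma)$ behaves, for $t-\sigma$ large, like $\exp(-\tfrac{\vartheta}{2}A_\infty(\langle M\rangle_t-\langle M\rangle_\sigma))$, and the vector $\tfrac{1}{\sqrt{\psi(t,t)}}\ell(t)^*\varphi(t)\varphi^{-1}(\sigma)b(\sigma)\tfrac{1}{\sqrt{\psi(\sigma,\sigma)}}$ decays exponentially in $t-\sigma$. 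So I would first establish the pointwise limit $\mathcal{G}(t,\sigma)\to G_\infty(t-\sigma)$ for a scalar convolution kernel $G_\infty$, together with a uniform exponential domination $|\mathcal{G}(t,\sigma)|\le Ce^{-c(t-\sigma)}$.

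Next I would reduce the operator norm computation to a convolution-operator (Toeplitz) problem. Since $\mathcal{K}_T(s,\sigma)=\int_{\max(s,\sigma)}^T \mathcal{G}(t,s)\mathcal{G}(t,\sigma)\,dt$ and $\mathcal{G}$ is asymptotically a function of the difference only, $\mathcal{K}_T$ is asymptotically a convolution kernel $\mathcal{K}_\infty(s-\sigma)=\int_0^\infty G_\infty(r+ |s-\sigma|\cdot\mathbf{1})\cdots$; more precisely $\mathcal{K}_\infty$ is the autocorrelation of $G_\infty$. Its associated operator on $L^2[0,T]$ has norm converging, as $T\to\infty$, to the supremum over $\lambda\in\mathbb R$ of the Fourier symbol $|\widehat{G_\infty}(\lambda)|^2$. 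Because $G_\infty$ is a one-sided exponential-type kernel coming from $e^{-\frac{\vartheta}{2}A_\infty r}$, its Fourier transform is a rational function of $\lambda$ whose modulus is maximized at $\lambda=0$, giving $\sup_\lambda|\widehat{G_\infty}(\lambda)|^2=\big(\int_0^\infty G_\infty(r)\,dr\big)^2$. A direct integration of the limiting ODE (replacing $A(t),b(t),\ell(t)$ by their limits and using $\ell_\infty^* A_\infty^{-1} b_\infty$) should yield $\int_0^\infty G_\infty(r)\,dr=1/\vartheta$, hence the claimed value $1/\vartheta^2$. The optimal test function is the constant one, $\widetilde v\equiv T^{-1/2}$ on $[0,T]$, which upon undoing the substitution $\widetilde v(s)=\tfrac{v(s)}{\sqrt T}\tfrac{1}{\sqrt{\psi(s,s)}}$ gives exactly $v_{opt}(t)=\sqrt{\psi(t,t)}$; I would verify that plugging this in achieves the bound asymptotically, so the $\sup$ is attained in the limit.

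The main obstacle is making the passage from the exact kernel $\mathcal{K}_T$ to the limiting convolution operator rigorous and uniform: one must control the error $\mathcal{G}(t,\sigma)-G_\infty(t-\sigma)$ both near the diagonal and for small $t$ (where $\langle M\rangle_t$ is not yet linear and $\psi$ has a boundary singularity), and show these edge effects contribute only $o(1)$ to the operator norm. I would handle this by splitting $[0,T]$ into $[0,T_0]$ and $[T_0,T]$: on the short block the operator norm is $O(T_0)=o(T)$ after normalization, and on the long block the exponential domination lets me compare with the stationary Toeplitz operator and invoke the standard fact that truncated Toeplitz operators converge in norm to $\sup|\text{symbol}|$. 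A secondary technical point is justifying that $\sup_\lambda|\widehat{G_\infty}(\lambda)|$ occurs at $\lambda=0$; this follows once $G_\infty(r)\ge 0$ for all $r$, which I would read off from the explicit solution of the limiting $2\times2$ linear system (the matrix $-\tfrac{\vartheta}{2}A_\infty$ having the right sign structure). Everything else — the explicit evaluations of $A_\infty$, $b_\infty$, $\psi(\infty,\infty)$, and the integral $\int_0^\infty G_\infty$ — is routine linear algebra that I would relegate to a computation.
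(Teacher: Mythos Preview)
Your lower-bound argument (test with the constant $\widetilde v\equiv T^{-1/2}$, i.e.\ $v_{opt}(t)=\sqrt{\psi(t,t)}$) matches the paper's and is fine. The upper bound, however, rests on an incorrect premise: you assert that $\langle M\rangle_t$ grows linearly and that $\psi(t,t)$ tends to a finite limit, so that $A(t),b(t),\ell(t)$ stabilize and $\mathcal G(t,\sigma)$ becomes a convolution kernel. This is only true for $H<\tfrac12$. For $H>\tfrac12$ the paper records (from \cite{CK19}) that $\langle M\rangle_T\sim \lambda_H^{-1}T^{2-2H}$, hence $\psi(t,t)=dt/d\langle M\rangle_t\sim \tfrac{\lambda_H}{2-2H}\,t^{2H-1}\to\infty$. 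Consequently the entries of $A(t)$ and $b(t)$ diverge, $\varphi(t)\varphi^{-1}(\sigma)$ is not asymptotically $\exp(-\tfrac{\vartheta}{2}A_\infty(\langle M\rangle_t-\langle M\rangle_\sigma))$, and there is no stationary Toeplitz limit to which you can compare. The $1/(2\vartheta)$ limit for $\mathcal I_{1,T}/T$ in \cite{CK19} that you invoke is itself a nontrivial computation in the $H>\tfrac12$ regime and does \emph{not} come from linear growth of $\langle M\rangle_t$; citing it does not justify your asymptotics.

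The paper bypasses this difficulty entirely. It identifies $\mathcal K_T$ as the covariance operator of a backward Gaussian process $\mathcal X_t=\tfrac12\xi_t b(t)/\sqrt{\psi(t,t)}$ and bounds the top eigenvalue $\nu_1(T)$ by computing the Laplace transform $L_T(a)=\mathbf E\exp(-a\int_0^T\mathcal X_t^2\,dt)$ via a matrix Riccati equation. The crucial step is rewriting the associated linear system as $(\Psi_1,\Psi_2\mathbf J)'=(\Psi_1,\Psi_2\mathbf J)\,(\Upsilon\otimes A(t))$: the tensor-product structure cleanly separates the constant $2\times2$ matrix $\Upsilon=\bigl(\begin{smallmatrix}\vartheta/2&-a/2\\-1&-\vartheta/2\end{smallmatrix}\bigr)$ from the time-varying $A(t)$, so the eigenvalues of $\Upsilon$ control $\det\Psi_1(T)$ regardless of how badly $A(t)$ behaves. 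Positivity of $\det\Psi_1(T)$ for all $a>-\vartheta^2/2$ then forces $\nu_1(T)\le 1/\vartheta^2$. Your convolution/Fourier route cannot substitute for this unless you first perform a nontrivial (and $H$-dependent) time/scale change that restores stationarity, which you have not indicated.
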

\begin{proof}
When we take $v(t)=v_{opt}(t)=\sqrt{\psi(t,t)}$, then 
$$
\frac{d\mathcal{P}(t)}{d\langle M\rangle_t}=-\frac{1}{2}\vartheta A(t)\mathcal{P}(t)+b(t)v_{opt}(t), \mathcal{P}(0)=\mathbf{0}_{2\times 1}.
$$
Because for $H>1/2$, $\frac{d\langle M\rangle_t}{dt}=g^2(t,t)$. From \cite{CK19} 
$$
\langle M\rangle_T\sim T^{2-2H}\lambda_H^{-1},\, T\rightarrow \infty,\, \lambda_H=\frac{2H\Gamma(3-2H)\Gamma(H+1/2)}{\Gamma(3/2-H)}.
$$
then with the calculus of \cite{BKP12} we can easily obtain
\begin{equation}\label{eq lim H big}
\lim_{T\to \infty}\frac{1}{4T}\int_0^T(\ell(t)^*\mathcal{P}(t))^2d\langle M\rangle_t=\frac{1}{\vartheta^2}.
\end{equation}
On the other hand for $H<1/2$ we have 
$$
\lim_{T\rightarrow \infty} \frac{\langle M\rangle_T}{T}=1
$$
and we can also easily obtain the result of \eqref{eq lim H big}, that is to say the lower bound at least will be $\frac{1}{\vartheta^2}$.

Now we will try to find the upper bound. Let us introduce the Gaussian process $(\xi_t,\, 0\leq t\leq T)$
$$
\xi_t=\left(\frac{1}{\sqrt{\psi(\sigma,\sigma)}}\ell(\sigma)^*\varphi(\sigma)\odot dW_{\sigma}\right)\varphi^{-1}(t),\, \xi_T=0
$$
where $\left(W_\sigma, \sigma \geq 0 \right)$ is a Wiener process and  $\odot$ denotes the It\^{o} backward integral (see \cite{Roz}). It is worth emphasizing that
$$
\mathcal{K}_T(s,\sigma)=\frac{1}{4}\mathbf{E}\left(\xi_s b(s)\frac{1}{\sqrt{\psi(s,s)}}\xi_{\sigma}b(\sigma)\frac{1}{\sqrt{\psi(\sigma,\sigma)}}\right)
=\mathbf{E}(\mathcal{X}_{\sigma}\mathcal{X}_s).
$$
where $\mathcal{X}$ is the centered Gaussian process defined by $
\mathcal{X}_t=\frac{1}{2}\xi_tb(t)\frac{1}{\sqrt{\psi(s,s)}}
$.  The process $(\xi_t,0\leq t \leq T)$ satisfies the following dynamic
$$
- d\xi_t=-\frac{\vartheta}{2}\xi_tA(t)d\langle M\rangle_t+\ell(t)^*\frac{1}{\sqrt{\psi(t,t)}} \odot dW_t,\, \xi_T=0.
$$
Obviously, $\mathcal{K}_T(s,\sigma)$ is a compact symmetric operator for fixed $T$, so we should estimate the spectral gap (the first eigenvalue $\nu_1(T)$) of the operator. The estimation of the spectral gap is based on the Laplace transform computation.
Let us compute, for sufficiently small negative $a<0$ the Laplace transform of $\int_0^T\mathcal{X}_t^2dt$:
\begin{eqnarray*}
% \nonumber to remove numbering (before each equation)
  L_T(a) &=& \mathbf{E}_{\vartheta}\exp \left(-a\int_0^T\mathcal{X}_t^2dt\right) \\
   &=&  \mathbf{E}_{\vartheta}\exp \left( -a\int_0^T \left(\frac{1}{2}\xi_tb(t)\frac{1}{\sqrt{\psi(t,t)}}          \right)^2dt\right)
\end{eqnarray*}
On one hand, for $a>- \frac{1}{\nu_1(T)}$, since $\mathcal{X}$ is a centered Gaussian process with covariance operator $\mathcal{K}_T$, using Mercer's theorem and Parseval's inequality, $L_T(a)$ can be represented as :
\begin{equation}\label{represent of Lap tran}
L_T(a)=\prod_{i\geq 1}(1+2a\nu_i(T))^{- \frac{1}{2}},
\end{equation}
where $\nu_i(T),\, i\geq 1$ is the sequence of positive eigenvalues of the covariance operator. On the other hand,
\begin{eqnarray*}
% \nonumber to remove numbering (before each equation)
  L_T(a) &=& \mathbf{E}_{\vartheta}\left(- \frac{a}{4}\int_0^T\xi_t b(t)b(t)^* \xi_t^*d\langle M\rangle_t\right) \\
   &=& \exp\left( \frac{1}{2}\int_0^T \mbox{trace}(\mathcal{H}(t)\mathcal{M}(t)d\langle M\rangle_t\right)
\end{eqnarray*}
where $\mathcal{M}(t)=\ell(t)^*\ell(t)$ and $\mathcal{H}(t)$ is the solution of Ricatti differential equation:
$$
\frac{d\mathcal{H}(t)}{d\langle M\rangle_t}=\mathcal{H}(t)\mathcal{A}(t)^*+\mathcal{A}(t)\mathcal{H}+\mathcal{H}(t)\mathcal{M}(t)\mathcal{H}(t)-\frac{a}{2}b(t)b(t)^*,
$$
with $\mathcal{A}(t)=-\frac{\vartheta}{2}A(t)$ and the initial condition $\mathcal{H}(0)=\mathbf{0}_{2\times 2}$, provided that the solution of this equation exists for any $0\leq t\leq T$.

It is well know that if $\det \Psi_1(t)>0$, for any $t\in [0,T]$, then $\mathcal{H}(t)=\Psi_1^{-1}(t)\Psi_2(t)$, where the pair of $2\times 2$ matrices $(\Psi_1,\,\Psi_2)$ satisfies the system of linear differential equations:
\begin{equation}\label{defPsi} 
\begin{array}{ll}
  \displaystyle \frac{d\Psi_1(t)}{d\langle M\rangle_t} = -\Psi_1(t)\mathcal{A}(t)- \Psi_2(t) \mathcal{M}(t), & \,\Psi_1(0)=\mathbf{Id}_{2\times 2}, \\
   & \\
 \displaystyle  \frac{d\Psi_2(t)}{d\langle M\rangle_t} = - \frac{a}{2} \Psi_1(t)b(t)b(t)^*+\Psi_2(t)\mathcal{A}(t)^*, & \, \Psi_2(0)=\mathbf{0}_{2\times 2}
  \end{array}
\end{equation}
and
\begin{equation}\label{eq: lap trans}
L_T(a)=\exp\left(- \frac{1}{2}\int_0^T \mbox{trace}\left( \mathcal{A}(t)\right)d\langle N\rangle_t\right)(\det \Psi_{1}(T))^{-\frac{1}{2}}.
\end{equation}
Rewriting the system \eqref{defPsi} in the following form
\begin{equation}\label{changePsi}
\frac{d(\Psi_1(t),\,\Psi_2(t)\mathbf{J})}{d\langle M\rangle_t}=(\Psi_1(t),\,\Psi_2(t)\mathbf{J})\cdot (\Upsilon \otimes A(t)),
\end{equation}
where
$
\mathbf{J}=\left(
      \begin{array}{cc}
        0 & 1 \\
        1 & 0 \\
      \end{array}
    \right) $    and  $\Upsilon=\left(\begin{array}{cc}\frac{\vartheta}{2} & -\frac{a}{2} \\- 1 & -\frac{\vartheta}{2}\end{array}\right)
$

When $-\frac{\vartheta^2}{2}\leq a \leq 0$, we have two real eigenvalues  of the matrix $\Upsilon$, we denote them $(x_i)_{i=1,2}$. It can be checked that there exists a constant $C>0$ such that 
$$
\det \Psi_1(T)=\exp\left( (x_1)T\right)(C+\underset{T\rightarrow \infty}{O}(\frac{1}{T}))
$$
where $x_1=\sqrt{\frac{\vartheta^2}{4}+\frac{a}{2}}$. Therefore, due to the \eqref{eq: lap trans}, we have $\prod_{i\geq 1}(1+2a\nu_i(T))>0$ for any $a>-\frac{\vartheta^2}{2}$. It means that 
$$
\nu_1(T)\leq \frac{1}{\vartheta^2}
$$
\end{proof}

\begin{lem}\label{Lapalace asymptotical}
For $v(t)=v_{opt}(t)$ defined in Lemma \ref{lemma1}, the Laplace Transform 
\begin{equation}\label{MLE: Laplace Transform}
\mathcal{L}_T(\mu)=\mathbf{E}_{\vartheta}\exp \left(-\frac{\mu}{T}\int_0^TQ_t^2d\langle M\rangle_t\right)\xrightarrow[T\to \infty]{}  \exp\left(-\mu\left(\frac{1}{2\vartheta}+\frac{1}{\vartheta^2}\right)\right)
\end{equation}
for every $\mu>0$.
\end{lem}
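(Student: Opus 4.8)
The plan is to show that $Y_T:=\frac1T\int_0^TQ_t^2\,d\langle M\rangle_t$ converges in probability to $\mathcal{I}(\vartheta):=\frac1{2\vartheta}+\frac1{\vartheta^2}$; since $0\le e^{-\mu Y_T}\le1$ for $\mu>0$, bounded convergence then gives $\mathcal{L}_T(\mu)=\mathbf{E}_\vartheta e^{-\mu Y_T}\to e^{-\mu\mathcal{I}(\vartheta)}$, which is \eqref{MLE: Laplace Transform}. The value of the limit is transparent: on the one hand $\mathbf{E}_\vartheta\int_0^TQ_t^2\,d\langle M\rangle_t=\frac14\mathbf{E}_\vartheta\int_0^T(\ell(t)^*\zeta_t)^2d\langle M\rangle_t=\mathcal{I}_T(\vartheta,v_{opt})$ by the Fisher-information identity preceding Theorem \ref{thm1}, so $\mathbf{E}_\vartheta Y_T\to\mathcal{I}(\vartheta)$ by Theorem \ref{thm1}; on the other hand, writing $Q_t=\frac12\ell(t)^*\zeta_t$ and $\zeta_t=\mathcal{P}(t)+\overline{P}_t$ as in \eqref{eq:zeta}--\eqref{eq: P} ($\mathcal{P}(t)=\mathbf{E}_\vartheta\zeta_t$ deterministic, $\overline{P}_t$ the centred Gaussian process with $d\overline{P}_t=-\tfrac{\vartheta}{2}A(t)\overline{P}_t\,d\langle M\rangle_t+b(t)\,dM_t$), one sees that the summand $\frac1{\vartheta^2}$ of $\mathcal{I}(\vartheta)$ comes from the deterministic part $\frac1{4T}\int_0^T(\ell(t)^*\mathcal{P}(t))^2d\langle M\rangle_t$ --- this is precisely \eqref{eq lim H big} --- while $\frac1{2\vartheta}$ comes from the centred part $\frac1{4T}\int_0^T(\ell(t)^*\overline{P}_t)^2d\langle M\rangle_t$, which is the no-drift Fisher information $\mathcal{I}_{1,T}(\vartheta)/T$.

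The concentration is obtained term by term after expanding $Q_t^2=\frac14(\ell(t)^*\zeta_t)^2$ with $\zeta=\mathcal{P}+\overline{P}$. The deterministic term converges (with no fluctuation) to $1/\vartheta^2$ by \eqref{eq lim H big}, established for the optimal input $v_{opt}(t)=\sqrt{\psi(t,t)}$ in Lemma \ref{lemma1}. The centred-quadratic term, divided by $T$, converges in probability to $1/(2\vartheta)$: this is the no-drift ($v\equiv0$) situation, in which $Q_t=\frac12\ell(t)^*\overline{P}_t$ and the convergence of $\frac1T\int_0^TQ_t^2d\langle M\rangle_t$ was proved in \cite{CK19} via the Laplace transform, the matrix Riccati system of Lemma \ref{lemma1} being the main tool and the regimes $H>1/2$ and $H<1/2$ being treated separately because $\langle M\rangle_t$ and $\psi(t,t)=dt/d\langle M\rangle_t$ grow differently there. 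Finally the cross term $\frac12\int_0^T\ell(t)^*\mathcal{P}(t)\,\ell(t)^*\overline{P}_t\,d\langle M\rangle_t$ is a centred Gaussian Wiener integral; I would bound its variance by a quantity of order $o(T^2)$ using the exponential mixing of the Gauss--Markov process $\overline{P}$ in $\langle M\rangle$-time together with the known growth of $\mathcal{P}$, so that $\frac1T$ times it vanishes in $L^2$. Adding the three contributions gives $Y_T\xrightarrow{P}\mathcal{I}(\vartheta)$.

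An alternative, more computational implementation --- in the spirit of the ``general Laplace transform'' used for Theorems \ref{th Laplace Transform} and \ref{thm1} --- computes $\mathcal{L}_T(\mu)$ in closed form at once: with $\int_0^TQ_t^2\,d\langle M\rangle_t=\frac14\int_0^T\zeta_t^*\ell(t)\ell(t)^*\zeta_t\,d\langle M\rangle_t$ and $\zeta$ the Gauss--Markov process \eqref{eq:zeta}, the Cameron--Martin formula gives $\mathcal{L}_T(\mu)=\big(\det\Psi_1(T)\big)^{-1/2}$ times the exponential of a trace term plus a quadratic form in the drift $b\,v_{opt}$, where $(\Psi_1,\Psi_2)$ solves the linear system \eqref{defPsi}--\eqref{changePsi} of Lemma \ref{lemma1} with $a$ replaced by $-\mu/T$ and augmented by an inhomogeneous vector equation carrying the drift. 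As $T\to\infty$ the exponential rate of $\det\Psi_1(T)$ is governed by the top eigenvalue $\sqrt{\vartheta^2/4-\mu/(2T)}=\tfrac{\vartheta}{2}-\tfrac{\mu}{2\vartheta T}+o(1/T)$ of the matrix flow, which together with the identity $\psi(t,t)\,d\langle M\rangle_t=dt$ (making the trace term a constant multiple of $T$) and the asymptotics \eqref{eq lim H big} of the drift quadratic form reproduces $e^{-\mu(\frac1{2\vartheta}+\frac1{\vartheta^2})}$.

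I expect the main obstacle to be the concentration of the centred-quadratic term: it is not elementary and rests on the spectral-gap / Riccati analysis of the associated Gaussian covariance operator, carried out in \cite{CK19} separately for $H>1/2$ and $H<1/2$; in the closed-form route the corresponding delicate step is to show that the subexponential corrections to $\det\Psi_1(T)$ and the inhomogeneous drift contribution affect $\log\mathcal{L}_T(\mu)$ only by $o(1)$ as $T\to\infty$ while the Riccati parameter is the small quantity $\mu/T$. The genuinely new ingredient relative to \cite{CK19}, namely the control of the cross term between the drift $\mathcal{P}$ and the noise $\overline{P}$, is of lower order and presents no conceptual difficulty.
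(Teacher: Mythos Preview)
Your alternative ``closed-form'' route is essentially the paper's proof: the paper applies the Kleptsyna--Le~Breton formula \cite{Klep01} to write
\[
\mathcal{L}_T(\mu)=\exp\Big\{-\tfrac{\mu}{T}\int_0^T\big[\mathrm{tr}(\Gamma(t)R(t))+Z^*(t)R(t)Z(t)\big]\,d\langle M\rangle_t\Big\},
\]
where $\Gamma$ solves a Riccati equation (playing the role of your $\Psi_1,\Psi_2$) and $Z(t)=\mathcal{P}(t)-\frac{\mu}{T}\int_0^t\varphi(t)\varphi^{-1}(s)\Gamma(s)R(s)Z(s)\,d\langle M\rangle_s$ carries the drift. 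The trace term gives $1/(2\vartheta)$ by \cite{CK19}, the leading piece $\mathcal{P}^*R\mathcal{P}$ of the second term gives $1/\vartheta^2$ via \eqref{eq lim H big}, and the integral correction in $Z$ is shown to be $O(1/T)$ by a Neumann-series bound $|Z(t)|\le\mbox{Const}$ combined with the kernel estimate $\int_0^t|F(t,s)|\,ds=O(1/T)$ --- exactly the ``subexponential corrections affect $\log\mathcal{L}_T(\mu)$ only by $o(1)$'' step you anticipated.

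Your primary route --- first proving $Y_T\xrightarrow{P}\mathcal{I}(\vartheta)$ through the decomposition $\zeta=\mathcal{P}+\overline{P}$, then invoking bounded convergence --- is a genuinely different and conceptually cleaner argument. It separates the deterministic drift contribution from the centred Gaussian part at the level of the random variable rather than inside the Laplace transform, so the cross term becomes a transparent Gaussian variance bound and no implicit equation for $Z(t)$ needs to be unwound. The trade-off is that you import the convergence in probability of the centred quadratic functional from \cite{CK19} as a black box, whereas the paper's route keeps an exact expression for $\mathcal{L}_T(\mu)$ at every finite $T$ --- which it then reuses in Lemma~\ref{Laplace log} (with $\mu$ not scaled by $1/T$) to obtain strong consistency.
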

\begin{proof}
First, we replace $Q_t$ with $\zeta_t$ and rewrite the Laplace transform, that is 
$$
\mathcal{L}_T(\mu)=\mathbf{E}_{\vartheta}\exp \left\{-\frac{\mu}{T}\int_0^T\zeta_t R(t)\zeta_t^*\ d\langle M\rangle_t\right\}
$$
where $\zeta_t$ is defined in \eqref{eq:zeta} and $R(t)=\frac{1}{4}\left(\begin{array}{cc}\psi^2(t,t) & \psi(t,t) \\\psi(t,t) & 1\end{array}\right)$. Following from \cite{Klep01}, we have 
$$
\mathcal{L}_T(\mu)=\exp \left\{-\frac{\mu}{T}\int_0^T\left[\mbox{tr}(\Gamma(t)R(t))+Z^*(t)R(t)Z(t)           \right]d\langle M\rangle_t \right\}
$$
where
$$
\frac{d\Gamma(t)}{d\langle M\rangle_t}=-\frac{\vartheta}{2}A(t)\Gamma(t)-\frac{\vartheta}{2}\Gamma(t)A(t)^*+b(t)b(t)^*-\frac{2\mu}{T}\Gamma(t)R(t)\Gamma(t)
$$
and
\begin{equation}\label{eq: function Z}
Z(t)=\mathbf{E}_{\vartheta}\zeta_t-\frac{\mu}{T}\int_0^t\varphi(t)\varphi^{- 1}\Gamma(s)R(s)Z(s)d\langle M\rangle_s
\end{equation}
with
$$
\frac{d\varphi(t)}{d\langle M\rangle_t}=-\frac{\vartheta}{2}A(t)\varphi(t).
$$
From \cite{CK19} we know that 
$$
\lim_{T\rightarrow \infty}\exp\left(-\frac{\mu}{T}\int_0^T (\tr (\Gamma(t)R(t))) d\langle M\rangle_t=\exp\left(\frac{\mu}{2\vartheta}\right)        \right)
$$
On the other hand we know $\mathbf{E}\zeta_t=\mathcal{P}(t)$ defined in Lemma \ref{lemma1} with $v(t)=v_{opt}(t)$, thus 
$$
\lim_{T\rightarrow \infty}\exp\left(-\frac{\mu}{T} \mathbf{E}\zeta_t R(t)  (\mathbf{E}\zeta_t)^*   \right)=\lim_{T\rightarrow \infty}\exp\left(-\frac{\mu}{4T} \int_0^T (\ell^*(t)\mathcal{P}(t))^2d\langle M\rangle_t    \right)=\exp\left(-\frac{\mu}{\vartheta^2}   \right)
$$
Now, the conclusion is true provided that  
$$
\lim_{T\rightarrow \infty}\left(\frac{\mu}{T}\int_0^t\varphi(t)\varphi^{- 1}\Gamma(s)R(s)Z(s)d\langle M\rangle_s\right)R(t)  \left(\frac{\mu}{T}\int_0^t\varphi(t)\varphi^{- 1}\Gamma(s)R(s)Z(s)d\langle M\rangle_s\right)^*=0.
$$
On one hand, from \cite{BKP12} and \cite{CK19} when $t$ is large enough
\begin{equation}\label{eq: limit F}
\int_0^t |F(t,s)|ds=\left|\frac{\mu}{T}\int_0^t  \varphi(t)\varphi^{- 1}\Gamma(s)R(s)\right|=O\left(\frac{1}{T} \right),\, T\rightarrow \infty
\end{equation}
where 
$$
F(t,s)=\left|\frac{\mu}{T}\varphi(t)\varphi^{- 1}\Gamma(s)R(s)\right|
$$
and
$|\cdot|$ denotes $L^1$ norm of the vector. On the other hand, If we define  the operator $S$ by
$$
S(f)(t)=\int_0^t \int_0^t |F(t,s)|f(s)ds
$$
then equation \eqref{eq: function Z}  leads to 
$$
|Z(t)|\leq  |\mathcal{P}(t)|  +S(|Z|)(t) 
$$ 
or we can say  $(I-S)(|Z|)(t)\leq |\mathcal{P}(t)|\leq Const$. From Equation \eqref{eq: limit F} we have for t and T large enough   
\begin{equation}\label{eq: Z constant}
|Z(t)|\leq (I-S)^{-1}(Const)(t)=\prod_{n=1}^{\infty}S^n(Const.)(t)\leq Const.
\end{equation}
The $Const.$ means some constant, but in different equation they may be different. Combining \eqref{eq: limit F} and \eqref{eq: Z constant} we have for t large enough 
$$
\int_0^t |F(t,s)| |Z(s)| =O\left(\frac{1}{T}\right),\, T\rightarrow \infty
$$
which achieves the proof.

\end{proof}

\begin{lem}\label{lem Laplace Transform}
For $H>1/2$, the limit of the Laplace Transform is 
$$
\lim_{T\rightarrow \infty}\mathcal{L}^{\alpha}_T(\mu)=\lim_{T\rightarrow \infty} \mathbf{E}_{\vartheta}\exp\left(-\frac{\mu}{T}\int_0^T (Q_t^{\alpha})^2 d\langle M\rangle_t\right)=\exp \left(-\frac{\mu}{2\vartheta}  \right),\, \forall \mu>0
$$
and for $H<1/2$,
$$
\lim_{T\rightarrow \infty}\mathcal{L}^{\alpha}_T(\mu)=\lim_{T\rightarrow \infty} \mathbf{E}_{\vartheta}\exp\left(-\frac{\mu}{T}\int_0^T (Q^{\alpha}_t)^2d\langle M\rangle_t\right)=\exp\left(-\mu\left(\frac{1}{2\vartheta}+\left(\frac{\alpha}{\vartheta}  \right)^2\right)   \right),\, \forall \mu>0.
$$
\end{lem}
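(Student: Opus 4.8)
The plan is to rerun, essentially verbatim, the argument of Lemma~\ref{Lapalace asymptotical}; the only genuinely new ingredient is the deterministic drift produced by the constant control $\alpha$. First I would move to the two-dimensional picture of Section~2.3.1. Writing $Q^{\alpha}_t=\tfrac{1}{2}\ell(t)^*\zeta^{\alpha}_t$, where $\zeta^{\alpha}$ solves \eqref{eq:zeta} with $v(t)$ replaced by $v^{\alpha}(t):=\frac{d}{d\langle M\rangle_t}\int_0^t g(s,t)\alpha\,ds$, one has $(Q^{\alpha}_t)^2\,d\langle M\rangle_t=\zeta^{\alpha}_t R(t)(\zeta^{\alpha}_t)^*\,d\langle M\rangle_t$ with the same $R(t)=\tfrac{1}{4}\ell(t)\ell(t)^*$ as in Lemma~\ref{Lapalace asymptotical}. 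By linearity of \eqref{eq:zeta} one has $\zeta^{\alpha}_t=\overline P_t+\mathcal P^{\alpha}(t)$, where $\overline P_t$ is the centred (zero-input) solution treated in \cite{CK19} and $\mathcal P^{\alpha}(t)=\mathbf{E}_{\vartheta}\zeta^{\alpha}_t$ is the deterministic function given by \eqref{eq: P2} with $v=v^{\alpha}$. Equivalently $Q^{\alpha}_t=Q^{0}_t+q^{\alpha}(t)$ with $q^{\alpha}(t)=\tfrac{1}{2}\ell(t)^*\mathcal P^{\alpha}(t)=\frac{d}{d\langle M\rangle_t}\int_0^t g(s,t)X^{det}_s\,ds$, where $X^{det}_t=\frac{\alpha}{\vartheta}(1-e^{-\vartheta t})$ is the deterministic solution of $\dot x=-\vartheta x+\alpha$; since $\langle M\rangle_t=\int_0^t g(s,t)\,ds$ one checks (as in \cite{CHX20}) that $q^{\alpha}(t)\to\alpha/\vartheta$ as $t\to\infty$ for every $H$.

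Second, I would invoke the representation from \cite{Klep01}, exactly as in the proof of Lemma~\ref{Lapalace asymptotical}, to write
\[
\mathcal L^{\alpha}_T(\mu)=\exp\Big\{-\frac{\mu}{T}\int_0^T\big[\mbox{tr}(\Gamma(t)R(t))+Z^*(t)R(t)Z(t)\big]\,d\langle M\rangle_t\Big\},
\]
with the same Riccati matrix $\Gamma$ (which does not depend on $\alpha$) and with $Z$ solving the linear Volterra equation \eqref{eq: function Z}, now started from $\mathbf{E}_{\vartheta}\zeta^{\alpha}_t=\mathcal P^{\alpha}(t)$. The trace term is literally the one analysed in \cite{CK19} and contributes $\exp(-\mu/(2\vartheta))$ in the limit, for every $H$. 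For the quadratic term I would reproduce the Neumann-type bound \eqref{eq: Z constant}: the kernel of \eqref{eq: function Z} is $O(1/T)$ in $L^1$ by \eqref{eq: limit F} and $\mathcal P^{\alpha}$ is bounded, so the Volterra correction is $O(1/T)$ uniformly on $[0,T]$, whence $Z(t)=\mathcal P^{\alpha}(t)+O(1/T)$ and
\[
\frac{\mu}{T}\int_0^T Z^*(t)R(t)Z(t)\,d\langle M\rangle_t=\frac{\mu}{T}\int_0^T (q^{\alpha}(t))^2\,d\langle M\rangle_t+o(1).
\]

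Third, the remaining deterministic limit is precisely where the dichotomy in the statement originates. Since $q^{\alpha}(t)\to\alpha/\vartheta$ one has $\frac{1}{T}\int_0^T (q^{\alpha}(t))^2\,d\langle M\rangle_t\sim(\alpha/\vartheta)^2\,\langle M\rangle_T/T$. For $H>1/2$, $d\langle M\rangle_t/dt=g^2(t,t)$ and $\langle M\rangle_T\sim\lambda_H^{-1}T^{2-2H}=o(T)$, so this term vanishes and $\mathcal L^{\alpha}_T(\mu)\to\exp(-\mu/(2\vartheta))$; for $H<1/2$, $\langle M\rangle_T/T\to1$, so the term tends to $\mu(\alpha/\vartheta)^2$ and $\mathcal L^{\alpha}_T(\mu)\to\exp\big(-\mu(\tfrac{1}{2\vartheta}+(\alpha/\vartheta)^2)\big)$, as asserted. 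A less computational variant avoids the Riccati machinery altogether: write $\frac{\mu}{T}\int_0^T(Q^{\alpha}_t)^2\,d\langle M\rangle_t=A_T+B_T+c_T$ with $A_T=\frac{\mu}{T}\int_0^T(Q^{0}_t)^2\,d\langle M\rangle_t$, $B_T=\frac{2\mu}{T}\int_0^T Q^{0}_t q^{\alpha}(t)\,d\langle M\rangle_t$ (a centred Gaussian with $\var(B_T)\to0$), and $c_T=\frac{\mu}{T}\int_0^T (q^{\alpha}(t))^2\,d\langle M\rangle_t$ deterministic with the two limits above; then $A_T\to\mu/(2\vartheta)$ in probability (by the $\alpha=0$ Laplace transform of \cite{CK19}), $B_T\to0$ in $L^2$, and since $e^{-(A_T+B_T+c_T)}\le1$ is uniformly bounded the conclusion follows by bounded convergence.

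The main obstacle, just as in Lemma~\ref{Lapalace asymptotical}, is the second step: showing that the Volterra correction is uniformly $O(1/T)$ and that $q^{\alpha}$ (equivalently $\mathcal P^{\alpha}$) stays bounded. Both rely on the precise asymptotics of $g(t,t)$, $\psi(t,t)$ and of the fundamental matrix $\varphi$ from \eqref{varphi} on the clock $\langle M\rangle_t$ (which is sublinear when $H>1/2$), together with the Gronwall/Neumann-series estimate behind \eqref{eq: Z constant}, and a check that the $\alpha$-forced Riccati system still admits a global solution on $[0,T]$ for each $\mu>0$ (here the sign of the $-\tfrac{2\mu}{T}\Gamma R\Gamma$ term is the favourable one). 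In the hands-on variant the analogous obstacle is the estimate giving $\var(B_T)\to0$, which reduces to the covariance decay of the stationary process $Q^{0}$, again available from \cite{CK19}.
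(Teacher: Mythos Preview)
Your main line of argument is essentially the paper's own proof: pass to the two-dimensional process $\zeta^{\alpha}$, apply the Kleptsyna--Le Breton representation to split the Laplace transform into the trace term (handled by \cite{CK19}, independent of $\alpha$) and the quadratic term in $Z$, identify $\mathbf{E}_{\vartheta}\zeta^{\alpha}_t$ via $\mathbf{E}X^{\alpha}_t=\tfrac{\alpha}{\vartheta}(1-e^{-\vartheta t})$, use the asymptotics of $\langle M\rangle_T/T$ to obtain the $H$-dichotomy, and dispose of the Volterra correction by the Neumann-series bound of Lemma~\ref{Lapalace asymptotical}. The only cosmetic difference is that the paper argues directly that the exponential part of $\mathbf{E}X^{\alpha}_t$ contributes nothing to the time average, whereas you pass through the pointwise claim $q^{\alpha}(t)\to\alpha/\vartheta$; either route leads to the same limit $(\alpha/\vartheta)^2\lim_T\langle M\rangle_T/T$.

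Your ``less computational variant'' (splitting $Q^{\alpha}=Q^{0}+q^{\alpha}$ and using bounded convergence plus $\var(B_T)\to0$) is not in the paper; it is a legitimate alternative that bypasses the Riccati/Volterra machinery at the cost of needing a separate covariance-decay estimate for $Q^{0}$, and it gives a somewhat cleaner explanation of why only the deterministic shift $q^{\alpha}$ feels the value of $H$.
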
 
\begin{proof}
The same as the optimal input case, when $u(t)=\alpha$, our two dimensional observed process $\zeta^{\alpha}=(\zeta^{\alpha}_t,\, 0\leq t \leq T)$ satisfies the following equation:
\begin{equation}\label{eq zeta alpha}
d\zeta^{\alpha}_t=\alpha b(t)d\langle M\rangle_t-\frac{\vartheta}{2}A(t)\zeta^{\alpha}_td\langle M\rangle_t+b(t)dM_t
\end{equation}
where $A(t)$, $b(t)$ are defined in \eqref{matrix A b}. From the previous proof we know 
\begin{equation}\label{eq Laplace 2 alpha}
\mathcal{L}^{\alpha}_T(\mu)=\exp \left\{-\frac{\mu}{T}\int_0^T\left[\tr(\Gamma(t)R(t))+\mathcal{Z}^*(t)R(t)\mathcal{Z}(t)           \right]d\langle M\rangle_t \right\}
\end{equation}
where 
\begin{equation}\label{eq: function Z alpha}
\mathcal{Z}(t)=\mathbf{E}\zeta^{\alpha}_t-\frac{\mu}{T}\int_0^t\varphi(t)\varphi^{- 1}\Gamma(s)R(s)\mathcal{Z}(s)d\langle M\rangle_s.
\end{equation}
The functions $\Gamma(t)$, $\varphi(t)$  and the matrix $R(t)$ are defined in the previous Lemma. Let us recall that 
$$
\mathbf{E}Q^{\alpha}_t=\mathbf{E}\frac{d}{d\langle M\rangle_t}\int_0^t g(s,t)X^{\alpha}_sds=\frac{d}{d\langle M\rangle_t}\int_0^t g(s,t)\mathbf{E}X^{\alpha}_sds.
$$
When 
$$
dX^{\alpha}_t=(\alpha-\vartheta X_t)dt+d\xi_t
$$
we have 
$$
\mathbf{E}X^{\alpha}_t=\frac{\alpha}{\vartheta}-\frac{\alpha}{\vartheta}e^{-\vartheta t}.
$$
It is obvious that when we calculate the limit of $\frac{1}{T}\int_0^T (\mathbf{E}Q^{\alpha}_t)^2 d\langle M\rangle_t$, the term $\frac{\alpha}{\vartheta}e^{-\vartheta t}$ has no contribution and will be $0$. Now 
$$
\lim_{T\rightarrow \infty}\frac{1}{T}\int_0^T (\mathbf{E}Q^{\alpha}_t)^2 d\langle M\rangle_t=\lim_{T\rightarrow \infty}\left(\frac{\alpha}{\vartheta}\right)^2\frac{1}{T}\langle M\rangle_T.
$$
From \cite{CK19}, this limit will be $0$ when $H>1/2$ and $\left(\frac{\alpha}{\beta}\right)^2$ when $H<1/2$. When 
$$
\int_0^T (\mathbf{E}\zeta^{\alpha}_t R(t) ) (\mathbf{E}\zeta^{\alpha}_t)^* d\langle M\rangle_t=\int_0^T \left(\frac{1}{2} \ell^*(t) \mathbf{E}\zeta^{\alpha}_t R(t) \right)^2d\langle M\rangle_t=\int_0^T (\mathbf{E}Q^{\alpha}_t)^2d\langle M\rangle_t, 
$$
the conclusion is true provided that  
$$
\lim_{T\rightarrow \infty}\left(\frac{\mu}{T}\int_0^t\varphi(t)\varphi^{- 1}\Gamma(s)R(s)\mathcal{Z}(s)d\langle M\rangle_s\right)R(t)  \left(\frac{\mu}{T}\int_0^t\varphi(t)\varphi^{- 1}\Gamma(s)R(s)\mathcal{Z}(s)d\langle M\rangle_s\right)^*=0
$$
and this proof can be also found in the previous Lemma.

\end{proof}

\begin{lem}\label{Laplace log}
For the controlled mixed fractional Ornstein-Uhlenbeck process with the drift parameter $\vartheta$, we have the following limit:
$$
\mathcal{K}_T(\mu)=-\frac{\mu}{T}\log \mathbf{E} \exp\left(-\mu \int_0^T Q_t^2d\langle M\rangle_t         \right) \rightarrow \frac{\mu}{\vartheta^2}+\frac{\vartheta}{2}-\sqrt{\frac{\vartheta^2}{4}+\frac{\mu}{2}},\,\, T\rightarrow \infty.
$$
for all $\mu>-\frac{\vartheta^2}{2}$.
\end{lem}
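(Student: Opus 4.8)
The plan is to view $\int_0^T Q_t^2\,d\langle M\rangle_t$ as a quadratic functional of the two‑dimensional Gaussian process $\zeta$ of \eqref{eq:zeta} and to evaluate its Laplace transform by the same matrix‑Riccati / Cameron--Martin computation already carried out in the proofs of Lemmas \ref{lemma1} and \ref{Lapalace asymptotical}. The only structural difference from Lemma \ref{Lapalace asymptotical} is that here the parameter $\mu$ is not rescaled by $T^{-1}$, so the transform decays exponentially in $T$ and the prefactor $-\tfrac1T\log$ isolates precisely that exponential rate instead of leaving a finite limit.

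First I would use $Q_t=\tfrac12\ell(t)^*\zeta_t$, so that $Q_t^2=\zeta_t^*R(t)\zeta_t$ with $R(t)=\tfrac14\ell(t)\ell(t)^*$ as in the proof of Lemma \ref{Lapalace asymptotical}, and split $\zeta_t=\mathcal P(t)+\overline P_t$ into the bounded deterministic mean $\mathcal P(t)=\mathbf E_\vartheta\zeta_t$ of \eqref{eq: P}--\eqref{eq: P2} and the centered Gaussian part $\overline P_t$ (equation \eqref{eq: P} with $v\equiv0$). Running the formula of \cite{Klep01} exactly as in Lemma \ref{Lapalace asymptotical}, but with $\mu$ in place of $\mu/T$, gives
\[
\mathbf E_\vartheta\exp\Bigl(-\mu\!\int_0^T Q_t^2\,d\langle M\rangle_t\Bigr)=\exp\Bigl(-\mu\!\int_0^T\bigl[\mathrm{tr}(\Gamma_\mu(t)R(t))+Z_\mu(t)^*R(t)Z_\mu(t)\bigr]\,d\langle M\rangle_t\Bigr),
\]
where $\Gamma_\mu$ is the solution of the Riccati equation obtained from that of Lemma \ref{Lapalace asymptotical} by replacing $2\mu/T$ with $2\mu$, and $Z_\mu$ solves (equivalently to the integral equation of Lemma \ref{Lapalace asymptotical}) the linear ODE $\tfrac{dZ_\mu}{d\langle M\rangle_t}=\bigl(-\tfrac\vartheta2A(t)-\mu\Gamma_\mu(t)R(t)\bigr)Z_\mu(t)+b(t)v_{opt}(t)$ with $Z_\mu(0)=\mathbf 0_{2\times 1}$. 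Hence $\mathcal K_T(\mu)=\tfrac\mu T\int_0^T[\mathrm{tr}(\Gamma_\mu R)+Z_\mu^*R Z_\mu]\,d\langle M\rangle_t$, and it remains to identify the two limits.

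For the $\mathrm{tr}(\Gamma_\mu R)$ term I would linearise the Riccati equation as in the proof of Lemma \ref{lemma1}, writing $\Gamma_\mu=\Psi_1^{-1}\Psi_2$ and recasting the system in the Kronecker form \eqref{changePsi} with the parameter $a$ replaced by $\mu$. For every $\mu>-\vartheta^2/2$ the matrix $\Upsilon$ then has the two real eigenvalues $\pm\sqrt{\vartheta^2/4+\mu/2}$, so $\det\Psi_1(t)>0$ on $[0,T]$ --- which also legitimises the displayed identity --- and, using the asymptotics $\langle M\rangle_T\sim\lambda_H^{-1}T^{2-2H}$ for $H>1/2$ and $\langle M\rangle_T/T\to1$ for $H<1/2$ from \cite{CK19} together with the computations of \cite{BKP12}, one gets $\det\Psi_1(T)=e^{T\sqrt{\vartheta^2/4+\mu/2}}(C+O(1/T))$ for some $C>0$, exactly as in Lemma \ref{lemma1}. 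Feeding this into \eqref{eq: lap trans} and using $\int_0^T\psi(t,t)\,d\langle M\rangle_t=\int_0^T dt=T$ to evaluate $\int_0^T\mathrm{tr}(\mathcal A(t))\,d\langle M\rangle_t$, the fluctuation term contributes $\tfrac\vartheta2-\sqrt{\vartheta^2/4+\mu/2}$ to $\mathcal K_T(\mu)$. For the $Z_\mu^*RZ_\mu$ term, one notes that $\Gamma_\mu(t)$ converges as $t\to\infty$ to the stabilising solution $\Gamma_\mu^\infty$ of the corresponding algebraic Riccati equation, so $Z_\mu$ is the output of an asymptotically autonomous, exponentially stable linear filter driven by $b(t)v_{opt}(t)$; a Cesàro/averaging argument of the same kind as the one behind \eqref{eq lim H big} then gives $\tfrac1T\int_0^T Z_\mu^*R Z_\mu\,d\langle M\rangle_t\to\tfrac1{\vartheta^2}$, i.e. the contribution $\mu/\vartheta^2$. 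Adding the two produces the stated limit, which, growing like $\mu/\vartheta^2$, is positive for all large $\mu$ --- the fact used in the proof of Theorem \ref{thm strong consistency}.

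The main obstacle is the mean term. In Lemma \ref{Lapalace asymptotical} the correction $Z_\mu-\mathbf E_\vartheta\zeta_t$ was $O(1/T)$ and was simply discarded; here it is only $O(1)$, so $Z_\mu$ genuinely departs from $\mathcal P$, and pinning down $\tfrac1T\int_0^T Z_\mu^*R Z_\mu\,d\langle M\rangle_t\to\vartheta^{-2}$ requires quantitative control of the rate $\Gamma_\mu(t)\to\Gamma_\mu^\infty$ and of the ensuing uniform exponential stability of the $Z_\mu$‑filter --- again with the regimes $H>1/2$ and $H<1/2$ treated separately because of the different growth of $\langle M\rangle_t$. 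The remaining delicate point --- non‑degeneracy of $\Psi_1$ on all of $[0,T]$ and the sharp subexponential asymptotics $\det\Psi_1(T)=e^{T\sqrt{\vartheta^2/4+\mu/2}}(C+O(1/T))$ --- is exactly where the hypothesis $\mu>-\vartheta^2/2$ enters: for $\mu\le-\vartheta^2/2$ the eigenvalues of $\Upsilon$ become purely imaginary, $\det\Psi_1(T)$ oscillates through zero, and the transform is infinite.
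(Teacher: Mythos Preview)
Your approach is essentially the paper's, just unpacked. The paper's proof is two sentences: the term $\tfrac{\vartheta}{2}-\sqrt{\tfrac{\vartheta^2}{4}+\tfrac{\mu}{2}}$ is quoted from \cite{Maru} (the large--deviation rate for the uncontrolled mixed fractional O--U, i.e.\ the centered--Gaussian/trace contribution), and the term $\tfrac{1}{\vartheta^2}$ is quoted from Lemma~\ref{Lapalace asymptotical} (the mean contribution). Your plan is the same decomposition --- you simply re-derive the \cite{Maru} part via the Riccati linearisation~\eqref{changePsi} of Lemma~\ref{lemma1} instead of citing it, and you appeal to the same Ces\`aro limit \eqref{eq lim H big} for the mean part.

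Where you go beyond the paper is in flagging the honest difficulty: with the unscaled $\mu$ the correction $Z_\mu-\mathcal P$ is no longer $O(1/T)$ as in Lemma~\ref{Lapalace asymptotical}, so the mean term does not reduce trivially to $\tfrac{1}{T}\int_0^T\mathcal P^*R\mathcal P\,d\langle M\rangle_t$. The paper simply writes ``$\tfrac{1}{\vartheta^2}$ from Lemma~\ref{Lapalace asymptotical}'' and does not address this; you at least propose the right mechanism (convergence $\Gamma_\mu(t)\to\Gamma_\mu^\infty$ and the resulting exponential stability of the $Z_\mu$ filter). Be aware, though, that your asserted Ces\`aro limit $\tfrac{1}{T}\int_0^T Z_\mu^*RZ_\mu\,d\langle M\rangle_t\to\tfrac{1}{\vartheta^2}$ is precisely the step neither you nor the paper actually carries out: since the $Z_\mu$ dynamics depend on $\mu$ through $\Gamma_\mu^\infty$, the independence of this limit from $\mu$ is not automatic and deserves a genuine computation rather than an analogy with~\eqref{eq lim H big}.
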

\begin{proof}
This proof is directly from \cite{Maru} and Lemma \ref{Lapalace asymptotical} or more specially, 
the term $\frac{\vartheta}{2}-\sqrt{\frac{\vartheta^2}{4}+\frac{\mu}{2}}$ comes from \cite{Maru} and $\frac{1}{\vartheta^2}$ from Lemma \ref{Lapalace asymptotical}.
\end{proof}

\end{document}